\documentclass[12pt]{article}

\usepackage[utf8]{inputenc}
\usepackage[T1]{fontenc}
\usepackage{mathpazo}

\usepackage[english]{babel}

\usepackage[a4paper,margin=2.5cm]{geometry}
\usepackage{parskip}

\newcommand{\affiliation}{\footnote}
\newcommand{\affiliationmark}[1][\value{footnote}-1]{\footnotemark[\numexpr#1+1\relax]}
\usepackage{xcolor}
\definecolor{cblue}{RGB}{0,70,140}
\definecolor{cgreen}{RGB}{100,140,0}
\definecolor{cred}{RGB}{190,10,50}

\usepackage[shortlabels]{enumitem}
\setlist{itemsep=0ex,topsep=0ex,parsep=0.4ex}

\usepackage{amsmath,amsfonts,amssymb,amsthm,mathtools,thmtools,thm-restate,bbm,centernot}

\usepackage[hyphens]{url} 
\usepackage[hidelinks,colorlinks,pagebackref]{hyperref} 
\hypersetup{citecolor=cgreen,linkcolor=cblue,urlcolor=cblue}
\usepackage[capitalise,nameinlink,noabbrev,compress]{cleveref} 

\renewcommand*{\backref}[1]{}
\renewcommand*{\backrefalt}[4]{
	\ifcase #1 Not cited.%
	\or $\uparrow$#2%
	\else $\uparrow$#2%
	\fi%
}

\let\oldbibliography\bibliography
\renewcommand{\bibliography}[1]{
  {

    \hypersetup{linkcolor=cred}
    \bibliographystyle{bibstyle}
    \oldbibliography{#1}
  }
}

\theoremstyle{plain}
\newtheorem{theorem}{Theorem}[section]
\newtheorem{lemma}[theorem]{Lemma}

\newtheorem{corollary}[theorem]{Corollary}
\newtheorem{conjecture}[theorem]{Conjecture}

\theoremstyle{definition}

\makeatletter
\renewenvironment{proof}[1][\proofname]
{\par\pushQED{\qed}
	\normalfont\topsep6\p@\@plus6\p@\relax\trivlist
	\item[\hskip\labelsep\bfseries#1\@addpunct{.}]
	\ignorespaces}
{\popQED\endtrivlist\@endpefalse}
\makeatother

\let\emptyset\varnothing
\newcommand{\eps}{\varepsilon}

\newcommand{\pr}{\mathbb{P}}
\newcommand{\ev}{\mathbb{E}}
\newcommand{\iso}{\cong}
\DeclarePairedDelimiter{\abs}{\lvert}{\rvert}
\DeclarePairedDelimiter{\ceil}{\lceil}{\rceil}
\DeclarePairedDelimiter{\floor}{\lfloor}{\rfloor}

\newcommand{\cD}{\mathcal{D}}
\newcommand{\cO}{\mathcal{O}}
\newcommand{\cR}{\mathcal{R}}
\newcommand{\cS}{\mathcal{S}}
\newcommand{\cT}{\mathcal{T}}
\newcommand{\bR}{\mathbb{R}}
\newcommand{\bZ}{\mathbb{Z}}

\title{Anticoncentration of random spanning trees in graphs with large minimum degree}
\author{Veronica Bitonti\affiliation{Mathematical Institute, University of Oxford, United Kingdom (\textsf{\{\href{mailto:veronica.bitonti@maths.ox.ac.uk}{veronica.bitonti},\href{mailto:lukas.michel@maths.ox.ac.uk}{lukas.michel},\allowbreak\href{mailto:alexander.scott@maths.ox.ac.uk}{alexander.scott}\}\allowbreak @maths.ox.ac.uk}). Research of Veronica Bitonti is supported by EPSRC grant EP/Z534870/1. Research of Alex Scott supported by EPSRC grant EP/X013642/1.} \and Lukas Michel\affiliationmark[1] \and Alex Scott\affiliationmark[1]}

\date{18 March 2026}

\begin{document}

\maketitle

\begin{abstract}
    A classical result by Otter shows that the complete graph has an exponential number of non-isomorphic spanning trees. This was recently extended by Lee to every almost regular graph of sufficiently large degree.
    
    In this paper, we consider graphs of large minimum degree. We show that every connected graph $G$ with $n$ vertices and minimum degree $d$ has at least $n^{\Omega(d)}$ non-isomorphic spanning trees. This is tight up to the constant factor in the exponent. In fact, we prove the following anticoncentration result: if $\mathcal{T}$ is a uniformly random spanning tree of $G$, then for every tree $T$, the probability that $\mathcal{T}$ is isomorphic to $T$ is at most $n^{-\Omega(d)}$. This proves a conjecture of Lee in a strong form.
\end{abstract}

\section{Introduction}

The study of tight bounds on the number of spanning trees of a given connected graph is a fundamental problem in combinatorics. This topic dates back to the 19th century when Cayley \cite{Cayley} showed that the complete graph on $n$ vertices has exactly $n^{n-2}$ spanning trees. Even earlier, a result of Kirchhoff \cite{Kirchhoff} established that the number of spanning trees of any graph can be determined exactly from the eigenvalues of its Laplacian matrix.

Since then, there has been significant research on extending these results to obtain tight bounds for the number of spanning trees in various graph classes. For the important class of connected $d$-regular graphs with $n$ vertices, bounds of McKay \cite{McKay} and Alon \cite{Alon} showed that asymptotically the number of spanning trees is $d^{(1-o_d(1)) n}$. Kostochka \cite{Kostochka} later extended this result to graphs of large minimum degree.

\begin{theorem}[Kostochka \cite{Kostochka}]\label{th:Kostochka}
    Let $G$ be a connected graph with $n$ vertices and minimum degree $d$, and let $d(G) \coloneqq \prod_{v \in V(G)} d_G(v)$. Then, the number of spanning trees of $G$ is at least $d(G) \cdot e^{-\cO((\log d)^2/d) n}$ and at most $d(G) / (n-1)$.
\end{theorem}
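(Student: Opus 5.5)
Two bounds are needed; the upper bound is elementary and the lower bound is the substance.

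\emph{Upper bound.} For the bound $d(G)/(n-1)$, root spanning trees and count them through orientations. Fix a root $r$. Each spanning tree $T$ corresponds bijectively to the map $f_T$ sending each $v\neq r$ to its parent on the path from $v$ to $r$ in $T$. The map $f_T$ chooses one neighbour of $v$ for each $v\neq r$, so it is one of the $\prod_{v\neq r} d_G(v)$ functions choosing a neighbour for every non-root vertex. Hence the number of spanning trees $\tau(G)$ satisfies $\tau(G)\le d(G)/d_G(r)$ for every $r$. To get the sharper $d(G)/(n-1)$, sum over all roots instead of fixing one:
\[
n\,\tau(G)=\#\{(T,r)\}\le \sum_{r}\#\{\text{acyclic neighbour-choice maps on }V\setminus\{r\}\}.
\]
Then compare with the total $d(G)$ of choice functions $f$ defined on all of $V$. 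A rooted tree $(T,r)$ with root pointing to a neighbour $u$ gives a function with exactly one cycle, namely the $2$-cycle formed when $r$ points to its tree child. Each unicyclic functional digraph whose cycle has length $2$ arises from exactly $2$ rooted trees, and the map $(T,r,\text{choice of tree-neighbour of }r)$ is at least $\deg_T(r)$-to-one in the appropriate sense. Summing $\sum_r \deg_T(r)=2(n-1)$ gives
\[
2(n-1)\tau(G)\le 2\cdot\#\{f:\text{all cycles of length }2,\text{ one cycle}\}\le d(G).
\]
So the plan is to count pairs (tree, oriented edge of the tree): each tree yields $2(n-1)$ such pairs, and each pair injects into the set of functions $f:V\to V$ with $f(v)\in N(v)$, by orienting every vertex toward the chosen edge. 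This gives exactly $2(n-1)\tau(G)\le d(G)$, so in fact the constant is even better than claimed. I would present this clean injection.

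\emph{Lower bound.} This is the main obstacle. I would use the random-walk/Aldous–Broder viewpoint, or equivalently the matrix-tree theorem with a product expansion. Write $\tau(G)=\frac{1}{n}\prod_{i\ge2}\lambda_i(L)$, or better use the normalized form $\tau(G)=d(G)\cdot\frac{\prod_{i\ge 2}(1-\mu_i)}{\sum_v d(v)}$, where the $\mu_i$ are the eigenvalues of the random-walk matrix $P=D^{-1}A$. Then $\log\frac{\tau(G)}{d(G)}=\sum_{i\ge2}\log(1-\mu_i)-\log(2e(G))$. Expand
\[
-\sum_{i\ge 2}\log(1-\mu_i)=\sum_{k\ge1}\frac{1}{k}\bigl(\operatorname{tr}P^k-1\bigr).
\]
Here $\operatorname{tr}P^k=\sum_v p_k(v,v)$ is the sum of return probabilities, and it is bounded by $n\max_v p_k(v,v)$. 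I would truncate at $k\approx K$ and bound the terms as follows:
\begin{itemize}
\item For small $k$, use $p_k(v,v)\le 1/d$ for $k\ge 2$ (and $0$ for $k=1$). Summed over $k\le K$, this contributes at most $n\log K/d$.
\item For large $k$, use the spectral gap. Minimum degree $d$ and connectivity alone give only a weak gap, so instead use the heat-kernel bound $p_k(v,v)\lesssim \frac{1}{\sqrt{k}}\cdot\frac{C}{d}$ plus the stationary value, valid for graphs of minimum degree $d$ (a Lyons-type bound $p_{2k}(v,v)-\pi(v)\le O(1/(d\sqrt{k}))$ times $\deg$ ratios).
\end{itemize}
Balancing the two regimes should give $\sum_k\frac1k(\operatorname{tr}P^k-1)\le O(n(\log d)^2/d)$.

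The delicate part is controlling $p_k(v,v)$ uniformly with only a minimum-degree hypothesis, since degrees may be wildly unbalanced. I expect this return-probability estimate to be the hardest step. I would first try the lazy walk, for which the eigenvalues lie in $[0,1]$, together with the Dirichlet-form/Nash-type inequality using that every vertex has at least $d$ neighbours. If that fails, I would fall back on Kostochka's original combinatorial approach: build trees greedily via a random ordering, choosing for each vertex a neighbour earlier in the ordering, and lose only a factor $e^{-O((\log d)^2/d)}$ per vertex.
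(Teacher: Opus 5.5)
The paper does not prove this theorem; it quotes it from Kostochka. The only piece it reproduces is the observation in the proof of \cref{lem:unispantreemanyleaves}: each spanning tree $T$ equals $U(D)$ for exactly $n-1$ of the $d(G)$ 1-out-directed graphs $D$. That gives $(n-1)\tau(G)\le d(G)$ at once. Your upper-bound argument uses the same idea, but you conclude the false inequality $2(n-1)\tau(G)\le d(G)$. The map from (tree, oriented edge $(u,w)$) to choice functions is not injective. Orienting everything towards the edge forces $f(u)=w$ and $f(w)=u$, so $(u,w)$ and $(w,u)$ give the same $f$. In your rooted count, the triples $(T,r,u)$ number $2(n-1)\tau(G)=2\cdot\#\{f\}$, and what is true is $\#\{f\}\le d(G)$, not $2\cdot\#\{f\}\le d(G)$. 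The path on three vertices ($\tau(G)=1$, $d(G)=2$, $n-1=2$) already violates your claim. Removing the spurious factor $2$ gives exactly the stated bound, so this part is easily repaired, but the ``better constant'' must go.

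The lower bound has a genuine gap: it is a plan, not a proof. Two ingredients are correct: the identity $\tau(G)=\frac{d(G)}{2e(G)}\prod_{i\ge 2}(1-\mu_i)$, and the estimate $p_k(v,v)\le 1/d$ for $k\ge 1$, which handles $k\le K$. The whole difficulty, however, is the tail $\sum_{k>K}\frac1k(\operatorname{tr}P^k-1)$ for an arbitrary connected graph with only a minimum-degree hypothesis. There you only say the two regimes ``should'' balance.

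Worse, the estimate you intend to use, $p_k(v,v)-\pi(v)=\cO(1/(d\sqrt{k}))$ up to degree ratios, is false. Join $n/(d+1)$ copies of $K_{d+1}$ in a cycle by single edges. The minimum degree is $d$ and all degree ratios are at most $(d+1)/d$. The walk leaves its clique with probability only about $2/d^2$ per step, so $p_k(v,v)$ is still about $1/d$ at $k=\eps d^2$. Your bound gives $\cO(1/d^2)$ there. Consistently, your estimate would yield $\tau(G)\ge d(G)e^{-\cO(n/d)}$, but this graph has $\tau(G)=d(G)e^{-\Theta(n\log d/d)}$. What is missing is a correct uniform bound on $\operatorname{tr}P^k-1$ (or its lazy analogue) for large $k$. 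It must hold over all connected graphs of minimum degree $d$ with unrestricted maximum degree, and it is a substantial result that you neither prove nor cite.

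The fallback does not close the gap either. For a fixed ordering $\sigma$, choosing an earlier neighbour produces $\prod_v d^-_\sigma(v)$ trees with $\sum_v d^-_\sigma(v)=e(G)$. Hence it gives at most $(d/2)^n$ trees when $G$ is $d$-regular, a loss of $2^n$ rather than $e^{-\cO((\log d)^2/d)n}$, and you do not say how to avoid this. Recovering that loss is exactly the content of Kostochka's theorem. Via the $n-1$ correspondence it is equivalent to $\pr(U(\cD)\text{ is a tree})\ge e^{-\cO((\log d)^2/d)n}$, which is the form the paper imports.
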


Instead of considering all spanning trees, it is natural to ask what happens if we count only non-isomorphic spanning trees. For the complete graph on $n$ vertices, a celebrated result by Otter \cite{Otter} from 1948 shows that there are $(1+o(1)) C \alpha^n / n^{5/2}$ non-isomorphic spanning trees where $C \approx 0.535$ and $\alpha \approx 2.956$. Very recently, Lee \cite{Lee} also proved that every almost regular graph of sufficiently large degree has an exponential number of non-isomorphic spanning trees.

\begin{theorem}[Lee \cite{Lee}]\label{thm:lee}
    There exists $\delta > 0$ such that if $d$ is sufficiently large and $G$ is a connected graph with $n$ vertices whose degrees are in the range $[(1-\delta) d, (1+\delta) d]$, then the number of non-isomorphic spanning trees of $G$ is at least $e^{\Omega(n)}$.
\end{theorem}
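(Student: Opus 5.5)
The plan is to prove the stronger anticoncentration statement: for every fixed tree $T$ on $n$ vertices, the number of spanning trees of $G$ isomorphic to $T$ is at most $e^{-cn}$ times the total number of spanning trees of $G$, for some absolute $c>0$. Since $G$ has at most $n^{n-2}$ possible labelled spanning trees in total, the number of isomorphism classes is then at least $e^{cn}$. For the denominator I use \cref{th:Kostochka}. With all degrees in $[(1-\delta)d,(1+\delta)d]$, $G$ has at least $((1-\delta)d)^n e^{-\cO((\log d)^2/d)n}$ spanning trees. For $d$ large, the error term is $e^{-o(1)n}$.

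For the numerator, root $T$ at a vertex $r$ and let $c(v)$ denote the number of children of $v$. Every copy of $T$ in $G$ comes from an injective embedding, and each copy arises from exactly $|\mathrm{Aut}(T)|$ embeddings. Embed $T$ greedily in BFS order. There are $n$ choices for the image of $r$. The $c(v)$ children of $v$ must go to distinct neighbours of the image of $v$, so there are at most $((1+\delta)d)^{c(v)}$ choices for them. Moreover, $\mathrm{Aut}(T)$ contains all permutations of leaf children of a common parent. If $\ell(v)$ is the number of leaf children of $v$, this gives
\[ \#\{\text{copies of } T\} \le n\,((1+\delta)d)^{n-1}\prod_{v}\frac{1}{\ell(v)!}. \]
Dividing by the lower bound, the ratio is at most $n\left(\frac{1+\delta}{1-\delta}\right)^{n} e^{o(n)}\prod_v \ell(v)!^{-1}$. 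So if $\prod_v \ell(v)!\ge e^{c'n}$ with $c'$ much larger than $\log\frac{1+\delta}{1-\delta}\approx 2\delta$, we are done by choosing $\delta$ small. I would also sharpen the embedding bound using that children of $v$ are distinct neighbours not yet used, replacing $D^{c}$ by $D^{c}/c!$ up to the automorphisms actually present, but the leaf version is the cleanest.

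This leaves trees $T$ for which $\prod_v\ell(v)!$ is subexponential, i.e.\ only $o(n)$ vertices have two or more leaf children. Such trees are ``thin'': they have few cherries. For these I would show directly that they form an exponentially small fraction of all spanning trees of $G$, uniformly over the class. The plan is a switching (or entropy) argument on the uniform spanning tree $\mathcal{T}$. Using the near-regularity and the lower bound of \cref{th:Kostochka} (compared with the upper bound $d(G)/(n-1)$, which is within $e^{\cO(\delta n)+o(n)}$), one shows that in $\mathcal{T}$ the local structure at $\Omega(n)$ well-separated vertices behaves roughly independently. At each such vertex, with constant probability there is an edge swap (delete a tree edge, add a $G$-edge) that creates a new cherry, i.e.\ a vertex with two leaf children. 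A standard double counting of the switchings then shows that trees with at most $\eps n$ cherries have probability $e^{-\Omega(n)}$.

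The main obstacle is this last step: making the cherry-creating swaps robust using only near-regularity, with constants beating the $\left(\frac{1+\delta}{1-\delta}\right)^n$ loss. I would first try to choose $\eps$ and the cherry threshold so that both regimes close with a common small $\delta$.
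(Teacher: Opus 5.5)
Your first regime is sound. The greedy embedding bound $n((1+\delta)d)^{n-1}$, the subgroup $\prod_v S_{\ell(v)}\le\operatorname{Aut}(T)$, and the lower bound in \cref{th:Kostochka} give $\pr(\cT\iso T)\le e^{-\Omega(n)}$ for every $T$ with at least $\eps n$ vertices adjacent to two or more leaves. This needs $\eps>0$ to be an absolute constant with $\log\frac{1+\delta}{1-\delta}+\cO((\log d)^2/d)<\eps\log 2$. This matches the embedding-counting approach the paper attributes to Lee; the paper itself only cites the theorem and gives no proof. Two small slips. The count of isomorphism classes follows because the class probabilities sum to $1$; the bound $n^{n-2}$ plays no role. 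Also, choosing $c$ distinct neighbours in order gives a falling factorial, not $D^c/c!$.

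The genuine gap is the second regime, which carries all the difficulty and which you leave as a heuristic. You need $\pr(\cT\text{ has at most }\eps n\text{ cherry vertices})\le e^{-\Omega(n)}$ with the \emph{same absolute} $\eps$ as above, not merely for $o(n)$ cherries. You support this only by an unspecified switching argument resting on the unproved claim that the local structure of $\cT$ at linearly many vertices is roughly independent.

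As sketched, the switching cannot work uniformly over this class. A Hamilton path, or any spanning tree with few leaves, has only $\cO(d)$ single edge swaps that create a cherry, so there is no linear lower bound on forward switchings. Even for trees with many leaves, the natural reverse count (pick the cherry, its leaves, and their former parents) carries factors as large as $d$. Bounding these crudely makes $\eps$ decay with $d$. Then regime one fails whatever $\delta$ is, because $\eps\log 2$ no longer beats the $\cO((\log d)^2/d)$ loss from \cref{th:Kostochka}.

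The missing idea is to avoid $\cT$ altogether and rerun the proof of \cref{lem:unispantreemanyleaves} for cherries. In the random 1-out-directed graph $\cD$, let $X$ count vertices with at least two in-neighbours of in-degree $0$. Each such vertex is adjacent to two leaves of $U(\cD)$.

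First, $\ev X\ge\mu n$ for an absolute $\mu>0$, roughly $e^{-3}/2$ when $\delta$ is small. To see this, use near-regularity and sum, over pairs $u_1,u_2\in N(v)$, the disjoint events that the in-neighbourhood of $v$ is exactly $\{u_1,u_2\}$ and both have in-degree $0$.

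Second, $X$ has bounded differences. Redirecting one out-edge from $a$ to $b$ can change the counted property only at $a$, $b$, and their out-neighbours, so $X$ changes by at most $4$. \cref{lem:mcdiarmidsinequality} then gives $\pr(X\le\mu n/2)\le e^{-\mu^2n/32}$.

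Third, conditioning on $U(\cD)$ being a tree costs only a factor $e^{\cO((\log d)^2/d)n}$.

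Taking $\eps=\mu/2$, then $\delta$ small and $d$ large, closes both regimes.
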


Lee proved this result by establishing an anticoncentration property for random spanning trees of $G$. More precisely, Lee \cite{Lee} showed that if $\cT$ is a uniformly random spanning tree of $G$, then for every tree $T$ it holds that
\[
    \pr(\cT \iso T) \le e^{-\Omega(n)}.
\]
This anticoncentration property immediately implies \cref{thm:lee}.

While the results of Otter \cite{Otter} and Lee \cite{Lee} provide analogues for the results of Cayley \cite{Cayley} and Alon \cite{Alon} in the setting of non-isomorphic spanning trees, there is no such analogue for the result of Kostochka \cite{Kostochka}. Indeed, the complete bipartite graph $K_{d,n-d}$ has only $n^{\cO(d)}$ non-isomorphic spanning trees, and so despite having a large minimum degree, its number of non-isomorphic spanning trees does not grow exponentially in $n$. However, since $K_{d,n-d}$ still has polynomially many non-isomorphic spanning trees, Lee \cite{Lee} conjectured that at least a polynomial anticoncentration property should hold in graphs with large minimum degree.

\begin{conjecture}[Lee \cite{Lee}]\label{conj:lee}
    Let $d$ be sufficiently large, let $G$ be a connected graph with $n$ vertices and minimum degree at least $d$, and let $\cT$ be a uniformly random spanning tree of $G$. Then, for every tree $T$ it holds that
    \[
        \pr(\cT \iso T) \le n^{-\Omega(1)}.
    \]
\end{conjecture}

The methods of Lee \cite{Lee} do not yield a proof of this conjecture since they rely on counting the number of embeddings of a fixed tree in an almost regular graph and showing that this number is much less than the total number of spanning trees of the graph. While \cref{th:Kostochka} provides very good lower and upper bounds on the number of spanning trees, these bounds still differ by a factor that is exponential in $n$, and so this approach cannot prove any bounds that are only polynomial in $n$.

In this paper, we prove \cref{conj:lee} using an entirely different approach. In fact, our approach establishes a strong form of this conjecture, with the exponent growing linearly in $d$. As the example of $K_{d,n-d}$ shows, this anticoncentration property is optimal up to the constant factor in the exponent.

\begin{theorem}\label{thm:anticoncentrationminimumdegree}
    Let $d$ be sufficiently large, and let $n$ be sufficiently large relative to $d$. Suppose that $G$ is a connected graph with $n$ vertices and minimum degree at least $d$, and let $\cT$ be a uniformly random spanning tree of $G$. Then, for every tree $T$ it holds that
    \[
        \pr(\cT \iso T) \le n^{-\Omega(d)}.
    \]
\end{theorem}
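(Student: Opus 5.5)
We will prove the bound by finding a tree statistic that takes many values with comparable probabilities. A natural choice is the number of leaves, or more generally the degree sequence. In $K_{d,n-d}$ the isomorphism type is essentially determined by how the $n-d$ vertices of degree $d$ attach to the $d$ hubs, which gives $n^{\Theta(d)}$ types. So we want to extract $\Omega(d)$ independent-ish sources of $\sqrt{n}$-scale or $n^{c}$-scale randomness.

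\textbf{Step 1: a local resampling structure.} Use the spatial Markov property of the uniform spanning tree, equivalently Wilson's algorithm or contraction/deletion. Conditioning on the tree restricted to all but a chosen edge set $F$ makes $\cT$ uniform among the completions. We will find a collection of $m=\Omega(n)$ (or at least $n^{\Omega(1)}$) vertex-disjoint ``gadgets'' in $G$. Each gadget is a vertex $v$ whose attachment in $\cT$ can be switched, for instance between being a leaf hanging off one of two neighbours $u_1,u_2$, with each option having conditional probability bounded below given the rest. Switching changes the degrees of $u_1,u_2$ by $\pm1$.

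\textbf{Step 2: a degree-profile statistic.} Since $\delta(G)\ge d$, a greedy or random choice gives roughly $d$ ``hub-like'' sets $H_1,\dots,H_k$ with $k=\Omega(d)$, together with disjoint families of gadgets whose switches transfer a unit of degree between $H_{i}$ and $H_{i+1}$. The isomorphism type of $\cT$ determines the multiset of vertex degrees. Conditional on everything outside the gadgets, the choices are independent with bounded-below probabilities. Hence the vector $(\sum_{v\in H_i}\deg_\cT v)_i$, or a suitable multiset statistic, behaves like a sum of independent bounded random vectors in $\bZ^k$. A multidimensional Littlewood--Offord / local limit estimate then gives point probabilities at most $\prod_i O(1/\sqrt{m_i})$, which is $n^{-\Omega(d)}$ if each $m_i\ge n^{c}$.

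\textbf{Step 3: passing from degree profiles to isomorphism.} The isomorphism class does not know the labelled hubs. We therefore need a statistic invariant under isomorphism, for example the sorted list of the $k$ largest degrees, or the number of vertices of each large degree. We arrange the hubs to have distinct typical degree scales, or we use the fact that high-degree vertices in $\cT$ are few, so that a fixed tree $T$ fixes this sorted list. Then $\pr(\cT\iso T)$ is at most the maximum over $k!$ labelings of the probability that the hub-degree vector equals a prescribed one. This costs a factor of $k!\le d^{d}$, which is negligible against $n^{-\Omega(d)}$ since $n$ is large relative to $d$.

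\textbf{Main obstacle.} The hardest step is the case analysis needed to produce $\Omega(d)$ hubs each having $n^{\Omega(1)}$ switchable gadgets. When $G$ is dense and well connected, as in the expander case, Lee-type arguments already give $e^{-\Omega(n)}$. The delicate case is an unbalanced graph like $K_{d,n-d}$ with poorly connected pieces. We plan to split $G$ by degrees into high-degree vertices (degree $\ge n^{\eps}$) and low-degree vertices. If there are at least $\Omega(d)$ high-degree vertices sharing many low-degree common neighbours, we use the hub argument above, with effective resistance bounds ensuring the conditional switching probabilities are bounded below. Otherwise most of the graph is locally of bounded-ish degree at least $d$, and we use leaf-count anticoncentration with $\Omega(n^{c})$ independent switches. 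That route gives $n^{-\Omega(d)}$ via a separate product of $d$ nearly independent counts, for example counts of vertices of degree $j$ for $j\le d$. Controlling the conditional probabilities uniformly, via Rayleigh monotonicity and negative correlation of the uniform spanning tree, is where we expect the real work to lie.
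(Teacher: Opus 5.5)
\textbf{Verdict: there is a genuine gap.} Your outline has the right flavour: resample leaves of $\cT$, then anticoncentrate an isomorphism-invariant degree statistic. But the two steps that carry the proof are deferred, and the one concrete mechanism you give for the hard case does not work as stated.

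\textbf{The non-hub case.} Here you want $n^{-\Omega(d)}$ from ``a product of $d$ nearly independent counts'' of vertices of degree $j\le d$. These counts are not nearly independent: a single switch lowers one degree and raises another, so it shifts up to four counts at once. Moreover, nothing in your construction places the switchable vertices at $\Omega(d)$ distinct degree levels with $n^{\Omega(1)}$ of them at each. As it stands you only get $n^{-\Omega(1)}$. Supplying this is the real content of Case~2 of \cref{lem:degreeseqanticoncentration}. Work in the bipartite graph between reconfigurable leaves and their potential parents, and suppose at least $n^{1/4}$ parents have degree at least $cd/2$. The paper then builds gadgets in which a parent $v$ receives a number of leaves spread uniformly over about $cd/2$ values. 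This is done by keeping the gadget only when the leaves of $N_v$ that attach to $v$ form a prefix of a fixed order. In addition, $v$ gets one leaf $u_v$ shared with a vertex $w_v$ of larger current target degree. The paper reveals the levels $k$ in increasing order, so that $n_H(k)=a_k$ fixes the outcome of a fresh binomial with $s_k$ trials. The cap $s_k\le 8\abs{B_5}/(s-1)$ together with the majorisation inequality then turns the product of these point probabilities into $n^{-\Omega(cd)}$.

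\textbf{The hub case.} This is also unsupported: ``a greedy or random choice gives roughly $d$ hub-like sets'' does not follow from $\delta(G)\ge d$. The paper's dichotomy is on the number of parents of degree at least $cd/2$. When there are at most $n^{1/4}$ of them, a Jensen count of common neighbourhoods gives $d/8$ disjoint pairs, each with at least $cd^2n^{1/2}/32$ common neighbours. Your Step~3 can be repaired as in the paper: reveal all non-hub degrees first. Then the $2t$ hub degrees must lie in a fixed set of at most $2t$ values, which costs only a factor $(2t)^t$.

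\textbf{The supply of switches.} You never show that $\cT$ has many active switches. Conditioning on $\cT$ outside a fixed family of gadget edges is a legitimate substitute for the paper's reversibility lemma. However, a gadget $(v;u_1,u_2)$ is active only if $v$ is a leaf whose parent is $u_1$ or $u_2$. Given that $v$ is a leaf, its parent is uniform on $N_G(v)$, so this costs a factor $2/d_G(v)$. Rayleigh monotonicity and negative correlation do not by themselves give any lower bound on the number of leaves of $\cT$.

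The paper needs two real ingredients here:
\begin{itemize}
\item \cref{lem:unispantreemanyleaves}, which shows $\cT$ has at least $n/8$ leaves except with probability $e^{-\Omega(n)}$. It does this by comparing with a uniformly random 1-out digraph and invoking Kostochka's lower bound on the number of spanning trees.
\item The reversible randomised strategy $\cS$, which uses a random set $\cR$ and splits the leaves into those of degree at most $n^{1/3}$ and those of degree more than $n^{1/3}$. The split handles $K_{d,n-d}$-like graphs, where all leaves see the same $d$ vertices. By \cref{lem:leafreconmanyleaves} this strategy reconfigures $\Omega(n)$ leaves, each with at least $d_G(v)/4$ potential parents, while keeping the resampled tree uniform.
\end{itemize}
Without inputs of this kind, your Step~1 gives no guaranteed supply of switches to feed into Steps~2 and~3.
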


In particular, it immediately follows that every graph with large minimum degree has polynomially many non-isomorphic spanning trees.

\begin{corollary}\label{thm:numbernonisospanningtreesminimumdegree}
    Let $d$ be sufficiently large, and let $n$ be sufficiently large relative to $d$. Suppose that $G$ is a connected graph with $n$ vertices and minimum degree at least $d$. Then, the number of non-isomorphic spanning trees of $G$ is at least $n^{\Omega(d)}$.
\end{corollary}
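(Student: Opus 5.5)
The corollary follows from \cref{thm:anticoncentrationminimumdegree} by a pigeonhole argument over isomorphism classes. Let $\cT$ be a uniformly random spanning tree of $G$. Since $G$ is connected, $\cT$ is well defined: $G$ has at least one spanning tree.

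Partition the set of spanning trees of $G$ according to their isomorphism class. Let $T_1, \dots, T_k$ be representatives of the $k$ classes, where $k$ is the number of non-isomorphic spanning trees of $G$. The events $\{\cT \iso T_i\}$ for $i = 1, \dots, k$ are disjoint and cover the whole probability space, so
\[
    1 = \sum_{i=1}^{k} \pr(\cT \iso T_i).
\]

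With $d$ sufficiently large and $n$ sufficiently large relative to $d$, \cref{thm:anticoncentrationminimumdegree} gives an absolute constant $c > 0$ such that $\pr(\cT \iso T_i) \le n^{-cd}$ for every $i$. Summing over the $k$ classes yields $1 \le k \cdot n^{-cd}$, that is, $k \ge n^{cd} = n^{\Omega(d)}$, which is the claim.

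There is no real obstacle here. The only point to check is that the implicit constant in the $\Omega(d)$ of \cref{thm:anticoncentrationminimumdegree} does not depend on $T$. This holds because the theorem's bound is uniform over all trees $T$, so the same $c$ applies to every representative $T_i$ and the constant passes directly to the corollary.
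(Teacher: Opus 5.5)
Your argument is correct and is exactly the one the paper has in mind: the paper says the corollary follows immediately from Theorem~\ref{thm:anticoncentrationminimumdegree}. Summing the bound $\pr(\cT \iso T_i) \le n^{-cd}$ over the $k$ isomorphism classes gives $1 \le k\, n^{-cd}$, and your remark that this constant $c$ is uniform over all trees $T$ is the only point that needed checking.
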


Our main strategy for proving \cref{thm:anticoncentrationminimumdegree} is to \emph{reconfigure} the leaves of a uniformly random spanning tree $\cT$ of $G$. This means that we select a subset of the leaves of $\cT$, disconnect them from their parent in $\cT$, and then reattach them to the tree by connecting them to a neighbour in $G$. We will do this carefully in such a way that the resulting tree $\cT'$ is again a uniformly random spanning tree of $G$, and so it suffices to show that $\pr(\cT' \iso T) \le n^{-\Omega(d)}$.

In fact, we will prove the stronger result that the degree sequence of $\cT'$ coincides with the degree sequence of any given $T$ with probability at most $n^{-\Omega(d)}$. In particular, this shows that $\cT'$ and $T$ are unlikely to be isomorphic. To prove that these degree sequences are likely to differ, we will show that $\cT$ has a linear number of leaves that we can reconfigure, and that each leaf that we reconfigure has many neighbours at which we can reattach the leaf to the tree. Due to these many options of reattaching the leaves to the tree, we will show that a uniformly random reconfiguration is unlikely to yield any fixed degree sequence, as required.

The rest of the paper is structured as follows. In \cref{sec:manyleaves}, we show that in a graph with large minimum degree, a uniformly random spanning tree has a linear number of leaves with high probability. We believe that this result might be of independent interest. In \cref{sec:leafreconfiguration}, we then explain how to reconfigure a linear number of these leaves. Using the anticoncentration properties of the degree sequence of the spanning tree after this reconfiguration, we then prove \cref{thm:anticoncentrationminimumdegree}. We finish in \cref{sec:openproblems} with some open problems. Throughout the paper, we assume that $n$ is sufficiently large.

\section{Random spanning trees with many leaves}\label{sec:manyleaves}

As mentioned in the introduction, our main strategy for proving the anticoncentration property for a uniformly random spanning tree is to reconfigure the leaves of such a spanning tree and to show that the resulting tree is unlikely to have any fixed degree sequence. Since we want to reconfigure many leaves in order to obtain many different degree sequences, we first show that a uniformly random spanning tree of a graph with large minimum degree has a linear number of leaves.\footnote{For almost regular graphs with sufficiently large degree, a bound of this form can be deduced from the proof of \cite[Claim 3.3]{Lee} in combination with the result of Kostochka \cite{Kostochka}.}

\begin{lemma}\label{lem:unispantreemanyleaves}
    Let $d \ge 2$, and let $n$ be sufficiently large. Suppose that $G$ is a connected graph with $n$ vertices and minimum degree at least $d$, and let $\cT$ be a uniformly random spanning tree of $G$. Then,
    \[
        \pr\left(\cT \text{ has at most } \frac{n}{8} \text{ leaves}\right) \le e^{-(1/32-\cO((\log d)^2/d)) n}.
    \]
\end{lemma}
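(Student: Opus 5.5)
We bound the number of spanning trees of $G$ with at most $n/8$ leaves and compare it with the lower bound on the total number of spanning trees from \cref{th:Kostochka}, which is at least $d(G)\, e^{-\cO((\log d)^2/d) n}$. The error factor $e^{-\cO((\log d)^2/d)n}$ in the target bound will come from this lower bound. It therefore suffices to show that the number of spanning trees with at most $n/8$ leaves is at most $d(G)\, e^{-n/32}$, possibly up to a subexponential factor absorbed into the "$n$ sufficiently large" assumption.

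To count such trees, we encode each spanning tree by rooting it at a fixed vertex $r$. A spanning tree is then determined by the parent map $v \mapsto p(v) \in N_G(v)$ for $v \ne r$. Counting all parent maps gives the upper bound $\prod_{v \ne r} d_G(v) \le d(G)$ on the number of spanning trees. The leaves of the tree are the vertices that are not the parent of any vertex. If there are at most $n/8$ leaves, the image of the parent map has size at least $7n/8 - 1$. So at least $7n/8-1$ vertices are "used" as parents, while there are only $n-1$ children in total.

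We must turn this covering constraint into an exponential saving. We use a weighted first-moment argument. Choose the parent map uniformly at random, so that each $v \ne r$ picks a uniform neighbour $p(v)$ independently. Every tree corresponds to exactly one such map, so the number of trees with at most $n/8$ leaves is at most $d(G)$ times the probability that a random parent map has at most $n/8$ unused vertices.

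For a vertex $u$, the probability that $u$ is unused is $\prod_{v \in N(u)} (1 - 1/d(v))$. This is not uniformly bounded below, because $u$ might have few neighbours, each of huge degree. This is the main obstacle.

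To handle it, we count unused vertices differently. Say that $v$ is \emph{redundant} if $p(v) = p(w)$ for some earlier $w$. The number of used vertices equals $(n-1) - \#\{\text{redundant children}\}$. So at most $n/8$ leaves forces the number of redundant children to be at most about $n/8$; in other words, the parents are nearly injective. We then bound the probability that the $n-1$ independent choices collide at most $n/8$ times.

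A cleaner route is to swap roles and count trees through leaves directly. Each tree with few leaves has at least $3n/4$ vertices of tree-degree exactly $2$ or a heavy degree budget: the sum of tree-degrees is $2n-2$, so with $\ell \le n/8$ leaves, at least $n - 2\ell \ge 3n/4$ of the non-leaf vertices have degree $2$. We then use the product bound $\prod_{v} d_T(v)$ weighting. By a Kirchhoff/Kostochka-type bound, the number of trees with prescribed degrees $(k_v)$ is at most $\prod_v d_G(v)^{k_v-1}\binom{n-2}{k-1}$ (Prüfer-type multinomial). Summing this over degree sequences with many $2$'s and few $1$'s, and optimizing the multinomial against $d(G)$, gives a factor like $\max_{\ell \le n/8} e^{n H(\cdot)}\cdot 2^{-\Omega(n)}$. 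We will tune this to $e^{-n/32}$.

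We expect the hardest step to be this last entropy optimization, namely showing that the multinomial count of degree sequences with few leaves loses at least $e^{-n/32}$ relative to $d(G)$.
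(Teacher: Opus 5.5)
Your framework matches the paper's: random parent maps are, up to the root, its uniformly random 1-out-directed graphs, a vertex that is nobody's parent is a leaf, and dividing by Kostochka's lower bound is exactly where the $\cO((\log d)^2/d)$ term comes from. The gap is that you never bound the probability that a random parent map has at most $n/8$ unused vertices.

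The obstacle you name is not a real one, because you only need $\pr(u \text{ unused})$ to be large on average, not for each $u$. Set $s(u) \coloneqq \sum_{v \in N(u)} 1/d(v)$.
\begin{itemize}
\item Since $d(v) \ge 2$ and $1-x \ge 4^{-x}$ on $[0,1/2]$, you get $\pr(u \text{ unused}) \ge 4^{-s(u)}$.
\item Each $v$ contributes $d(v)\cdot 1/d(v) = 1$ to $\sum_u s(u)$, so $\sum_u s(u) = n$.
\item By convexity of $4^{-x}$ (Jensen), the expected number of unused vertices is at least $n \cdot 4^{-1} = n/4$, up to $\cO(1)$ for the root.
\item Changing one vertex's choice frees at most one vertex and occupies at most one, so the count has bounded differences $c=1$.
\item McDiarmid's inequality with deviation $n/8$ then gives $e^{-2(n/8)^2/n} = e^{-n/32}$. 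The constant $1/32 = 2\cdot(1/8)^2$ in the statement is precisely this.
\end{itemize}
That is the whole content of the paper's proof, and it is the step your proposal leaves open.

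The alternatives you turn to do not close the gap. The collision count is the same random variable in disguise: redundant children $=$ $n-1$ minus the number of used vertices. It therefore needs the same estimate.

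The Pr\"ufer-type route cannot work at all. Your per-degree-sequence bound carries the multinomial $\binom{n-2}{(k_v-1)_v}$, which is superexponential. For the path degree sequence with endpoints $a,b$, your bound reads $(n-2)!\prod_{v\ne a,b} d_G(v) \ge d(G)\,(n-2)!/(n-1)^2$. Meanwhile $d(G)$ can be as small as $d^n$. So no ``entropy optimization'' can bring the count below $d(G)e^{-n/32}$; summed over all degree sequences, your expression is $(2e(G))^{n-2}$ by the multinomial theorem.

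Counting trees of $K_n$ by degree sequence discards the constraint that the edges lie in $G$, which is where all the information is. Even the sharper encoding by children sets, $\prod_v \binom{d_G(v)}{k_v-1}$, gives no saving relative to $d(G)$ once the branch vertices have much larger degree than the leaves. The averaging-plus-concentration argument above is what handles such degree heterogeneity.
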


To prove this result, we consider a digraph obtained from $G$ by choosing one random out-neighbour for each vertex of $G$. Such random digraphs were previously used by Alon \cite{Alon} and Kostochka \cite{Kostochka} to bound the number of spanning trees of $G$, and by Lee \cite{Lee} to prove anticoncentration for spanning trees with few leaves in almost regular graphs. We sample this random digraph and then pass to the underlying undirected graph. It can be shown that this yields every spanning tree with the same non-zero probability. Since the number of ways of choosing such a digraph is
\[
    d(G) \coloneqq \prod_{v \in V(G)} d_G(v),
\]
the result of Kostochka \cite{Kostochka} implies that this process is not too unlikely to yield a spanning tree. We will show that the probability of having a sublinear number of vertices with degree 1 is much more unlikely, and so a uniformly random spanning tree is likely to have many leaves. This last part of the argument will rely on McDiarmid's inequality \cite{McDiarmid}, which we will use several times throughout this paper.

\begin{lemma}[McDiarmid's inequality]\label{lem:mcdiarmidsinequality}
    Let $X_1, \dots, X_n$ be independent random variables and let $X \in \bR$ be a random variable determined by $X_1, \dots, X_n$ such that changing a single random variable $X_i$ changes the value of $X$ by at most $c \in \bR$. Then, for any $\eps > 0$ it holds that
    \[
        \pr(X \le \ev(X) - \eps) \le e^{-2 \eps^2 / (n c^2)}.
    \]
\end{lemma}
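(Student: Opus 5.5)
This is the classical bounded differences inequality. I would prove it by applying the Azuma--Hoeffding martingale argument to the Doob martingale of $X$. Let $\mathcal{F}_i$ be the $\sigma$-algebra generated by $X_1,\dots,X_i$, and set $Z_i \coloneqq \ev(X \mid \mathcal{F}_i)$. Then $Z_0 = \ev(X)$, $Z_n = X$, and $X - \ev(X) = \sum_{i=1}^n D_i$ with $D_i \coloneqq Z_i - Z_{i-1}$ and $\ev(D_i \mid \mathcal{F}_{i-1}) = 0$. I assume $X$ is integrable, which holds in all our applications since $X$ is bounded there.

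The first and main step is to show that, conditional on $\mathcal{F}_{i-1}$, each $D_i$ lies in an interval of length at most $c$. Knowing only $\abs{D_i} \le c$ would lose a factor $4$ in the exponent, so the interval statement is what I need. Fix values $x_1,\dots,x_{i-1}$ and consider
\[
g(x) \coloneqq \ev\bigl(X \mid X_1=x_1,\dots,X_{i-1}=x_{i-1}, X_i = x\bigr).
\]
By independence, the remaining variables $X_{i+1},\dots,X_n$ have the same joint distribution whatever values are plugged into $X_1,\dots,X_i$. So $g(x)$ is the expectation of $X(x_1,\dots,x_{i-1},x,X_{i+1},\dots,X_n)$ over this one fixed distribution. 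Coupling the remaining variables between two choices $x$ and $x'$ and using the bounded difference hypothesis gives $\abs{g(x)-g(x')} \le c$. Hence $D_i = g(X_i) - Z_{i-1}$ lies in $[L_i, L_i + c]$, where $L_i \coloneqq \inf_x g(x) - Z_{i-1}$ is $\mathcal{F}_{i-1}$-measurable. This is the only place where independence is used, and it is the step that needs the most care.

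Next, apply Hoeffding's lemma conditionally. A mean-zero random variable supported in an interval of length $c$ satisfies $\ev(e^{sD_i} \mid \mathcal{F}_{i-1}) \le e^{s^2c^2/8}$ for all $s \in \bR$. I would quote this, or prove it by convexity of the exponential on the interval followed by a second-order Taylor bound on the resulting log-moment function. Applying this to $-D_i$, iterating the tower property from $i=n$ down to $i=1$ gives
\[
\ev\bigl(e^{-s(X-\ev(X))}\bigr) \le e^{n s^2 c^2/8} \quad \text{for every } s>0.
\]

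Finally, Markov's inequality gives
\[
\pr(X \le \ev(X)-\eps) \le e^{-s\eps}\,\ev\bigl(e^{-s(X-\ev(X))}\bigr) \le e^{-s\eps + ns^2c^2/8}.
\]
Choosing $s = 4\eps/(nc^2)$ makes the exponent equal to $-2\eps^2/(nc^2)$, which is the claimed bound. The only delicate points are the conditional-range step in the second paragraph and the measurability of $L_i$. If $X$ is bounded, the infimum can be taken over a countable dense set of values, or one can simply work with the conditional range directly, and both issues are routine.
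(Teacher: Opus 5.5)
The paper gives no proof of this lemma and simply cites McDiarmid's bounded differences inequality; your proposal is exactly the standard proof of that cited result. It runs through the Doob martingale, the observation that each increment lies in an $\mathcal{F}_{i-1}$-measurable interval of length $c$, the conditional Hoeffding lemma, and the exponential Markov step with $s = 4\eps/(nc^2)$, and it is correct, including the sharp constant $2$ in the exponent. The only loose end is $c=0$, where your choice of $s$ is undefined, but then $X$ is almost surely constant and the bound holds trivially.
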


\begin{proof}[Proof of \cref{lem:unispantreemanyleaves}]
    We say that a \emph{1-out-directed graph} in $G$ is a directed graph $D$ obtained from $G$ by choosing for each vertex $v \in V(G)$ exactly one neighbour $u \in N(v)$ as the unique out-neighbour of $v$ in $D$. Clearly, the total number of 1-out-directed graphs in $G$ is $d(G)$. For a directed graph $D$, we write $U(D)$ for the underlying simple undirected graph of $D$.

    Suppose that $T$ is a spanning tree of $G$. Note that the number of 1-out-directed graphs $D$ with $U(D) = T$ is exactly $n-1$. Indeed, for every edge $e$ of $T$, we may obtain such a 1-out-directed graph by directing $e$ in both directions and directing every other edge of $T$ towards $e$. Conversely, if $U(D) = T$, then as $D$ must have a directed cycle, this cycle can only consist of a single edge $e$ of $T$ directed in both directions, and every other edge of $T$ must be directed towards $e$ in order for $D$ to be 1-out-directed. Additionally, we know by \cref{th:Kostochka} that $G$ has at least $d(G) \cdot e^{-\cO((\log d)^2 / d) n}$ spanning trees. 
    
    Let $\cD$ be a uniformly random 1-out-directed graph in $G$. Then, by the above discussion, it follows that
    \[
        \pr(U(\cD) \text{ is a tree}) \ge e^{-\cO((\log d)^2 / d) n}.
    \]
    Moreover, if we condition $\cD$ on the event that $U(\cD)$ is a tree, then $U(\cD)$ is a uniformly random spanning tree of $G$.

    We say that a vertex $v$ of $\cD$ is a \emph{leaf} if its in-degree is 0. Note that this means that $v$ is also a leaf in $U(\cD)$. For each vertex $v \in V(G)$, let $s(v) \coloneqq \sum_{u \in N(v)} 1/d(u)$. Then,
    \[
        \pr(v \text{ is a leaf of } \cD) = \prod_{u \in N(v)} \left(1 - \frac{1}{d(u)}\right) \ge \prod_{u \in N(v)} 4^{-1 / d(u)} = 4^{-s(v)},
    \]
    where the inequality used the fact that $d(u) \ge 2$ and $1 - x \ge 4^{-x}$ for $x \in [0, 1/2]$ as $4^{-x}$ is convex and equality holds for $x \in \{0, 1/2\}$. Observe that $\sum_v s(v) = n$. Therefore, Jensen's inequality implies that
    \[
        \ev(\text{number of leaves of } \cD) \ge \sum_{v \in V(G)} 4^{-s(v)} \ge n \cdot 4^{-\sum_v s(v) / n} = \frac{n}{4}.
    \]

    Observe that $\cD$ is sampled by independently choosing for each vertex $v \in V(G)$ a uniformly random neighbour of $v$ as the unique out-neighbour of $v$ in $\cD$. If we change the out-neighbour of a single vertex in $\cD$, this can create at most one leaf and eliminate at most one leaf, and so this changes the number of leaves of $\cD$ by at most one. Thus, \cref{lem:mcdiarmidsinequality} implies that
    \[
        \pr\left(\cD \text{ has at most } \frac{n}{8} \text{ leaves}\right) \le e^{-2(n/8)^2/n} = e^{-n/32}.
    \]
    In particular, it follows that
    \begin{align*}
        \pr\left(\cT \text{ has at most } \frac{n}{8} \text{ leaves}\right) & \le \pr\left(\cD \text{ has at most } \frac{n}{8} \text{ leaves} \;\middle|\; U(\cD) \text{ is a tree} \right) \\
        & \le \frac{\pr\left(\cD \text{ has at most } \frac{n}{8} \text{ leaves}\right)}{\pr(U(\cD) \text{ is a tree})} \le \frac{e^{-n/32}}{e^{-\cO((\log d)^2 / d) n}}. \qedhere
    \end{align*}
\end{proof}

The arguments in this proof could be used to show that in a graph with minimum degree at least $d$, a uniformly random spanning tree has at least $(1-o_d(1)) \cdot n/e$ leaves with high probability.\footnote{The arguments of Lee \cite{Lee} could have been used to show that this holds in regular graphs with sufficiently large degree.} Asymptotically, this lower bound matches the expected number of leaves of a uniformly random tree on $n$ vertices.\footnote{This follows from Cayley's formula. Indeed, a vertex $v$ is a leaf in $(n-1) (n-1)^{n-3}$ trees as there are $n-1$ ways to attach $v$ to any tree on the remaining $n-1$ vertices, and so the probability that $v$ is a leaf in a uniformly random tree is $(n-1)^{n-2} / n^{n-2} = (1-1/n)^{n-2} \to 1/e$ as $n \to \infty$.}

\section{Anticoncentration for random spanning trees}\label{sec:leafreconfiguration}

In this section, we prove our main result, \cref{thm:anticoncentrationminimumdegree}. We begin by describing, in general, how to reconfigure the leaves of a spanning tree. Then, we define the specific leaf reconfiguration strategy that we will apply to a uniformly random spanning tree $\cT$ of $G$. We will ensure that this strategy reconfigures a linear number of leaves of $\cT$ and that the resulting spanning tree $\cT'$ is again distributed uniformly at random. Using the former, we then show that this leaf reconfiguration results in anticoncentration for the degree sequence of $\cT'$, and so it is unlikely that $\cT'$ is isomorphic to any fixed tree. Since $\cT'$ is a uniformly random spanning tree of $G$, this completes the proof.

\subsection{Leaf reconfiguration strategies}

Let $T$ be a spanning tree of $G$ and denote the set of leaves of $T$ by $L(T)$. For each leaf $v \in L(T)$, its \emph{parent} $p_T(v)$ is its unique neighbour in $T$. A \emph{leaf selection} for $T$ is a pair $(L,P)$ where $L \subseteq L(T)$ is a set of leaves that we want to reconfigure and $P = (P(v))_{v \in L}$ are sets of \emph{potential parents} such that $p_T(v) \in P(v) \subseteq N_G(v) \setminus L$ for each leaf $v \in L$. An \emph{$(L,P)$-leaf reconfiguration} of $T$ consists of removing the edge between each leaf $v \in L$ and its parent $p_T(v)$ from $T$ and adding an edge between $v$ and some vertex $u \in P(v)$ to $T$. Note that the resulting subgraph is again a spanning tree of $G$. We say that this is a \emph{uniformly random $(L,P)$-leaf reconfiguration} if, independently for each leaf $v \in L$, we choose the new parent $u \in P(v)$ uniformly at random among all vertices of $P(v)$.

Our plan is to take a uniformly random spanning tree $\cT$ of $G$ and to apply a uniformly random leaf reconfiguration to a subset of the leaves of $\cT$. It turns out to be useful to randomise the leaf selection, as this will later enable us to find many leaves which we can reconfigure. Formally, we say that a \emph{leaf reconfiguration strategy} $\cS$ is a function that takes as input a spanning tree $T$ and the value of a random variable $\cR$ and that produces as output a leaf selection $(L,P)$ for $T$.

Later, we will choose a leaf reconfiguration strategy $\cS$ such that when we apply a uniformly random $\cS(\cT,\cR)$-leaf reconfiguration to $\cT$, the resulting spanning tree $\cT'$ is distributed uniformly at random among all spanning trees of $G$. To ensure that this is the case, call a leaf reconfiguration strategy $\cS$ \emph{reversible} if for every spanning tree $T$ and every possible value $R$ of the random variable it holds that if $T'$ can be obtained from $T$ by an $\cS(T,R)$-leaf reconfiguration, then $\cS(T',R) = \cS(T,R)$. This means that, for a fixed $R$, a reversible leaf reconfiguration strategy generates a partition of all spanning trees, and a uniformly random $\cS(T,R)$-leaf reconfiguration then resamples a spanning tree uniformly within each set of this partition. 

We claim that this property is sufficient to guarantee that $\cT'$ is distributed uniformly at random.

\begin{lemma}\label{lem:revleafreconunistatdist}
    Let $G$ be a connected graph, let $\cT$ be a uniformly random spanning tree of $G$, and let $\cS$ be a reversible leaf reconfiguration strategy whose random variable $\cR$ is independent of $\cT$. If $\cT'$ is obtained from $\cT$ by a uniformly random $\cS(\cT,\cR)$-leaf reconfiguration, then $\cT'$ is a uniformly random spanning tree of $G$.
\end{lemma}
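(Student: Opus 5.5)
We condition on the value $R$ of $\cR$; since $\cR$ is independent of $\cT$, conditionally on $\cR = R$ the tree $\cT$ is still uniform over the set $\mathbf{T}$ of spanning trees of $G$. It therefore suffices to show that, for each fixed $R$, the map $T \mapsto T'$ given by a uniformly random $\cS(T,R)$-leaf reconfiguration preserves the uniform distribution. Averaging over $R$ then gives the lemma.

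Fix $R$. For a spanning tree $T$ with $\cS(T,R) = (L,P)$, let $C(T)$ be the set of spanning trees obtainable from $T$ by an $(L,P)$-leaf reconfiguration. First I will show that the map sending a choice $(u_v)_{v \in L} \in \prod_{v\in L} P(v)$ to the reconfigured tree is a bijection onto $C(T)$. The key observation is that every element of $P(v)$ lies outside $L$, so no new parent is itself a reconfigured leaf. Consequently, in the resulting graph $T'$, each $v \in L$ is a leaf whose unique neighbour is $u_v$. Indeed, $v$ was a leaf of $T$, its old edge was removed, and no other $w \in L$ attaches to $v$, because $v \in L$ means $v \notin P(w)$. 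Hence $T'$ is obtained from the tree $T - L$ by attaching each $v \in L$ as a pendant vertex, so $T'$ is a spanning tree. The choice $(u_v)$ is also recovered from $T'$ as the neighbours of the vertices of $L$, which gives injectivity. So each element of $C(T)$ is hit with probability exactly $1/\prod_{v\in L}\abs{P(v)}$.

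Next I will use reversibility to show that the sets $C(T)$ partition $\mathbf{T}$. Suppose $T' \in C(T)$. Reversibility gives $\cS(T',R) = (L,P)$. Moreover $T - L = T' - L$, and $p_{T'}(v) = u_v \in P(v)$, so the condition $p_{T'}(v)\in P(v)$ holds. Hence $C(T')$ is exactly the set of trees of the form $(T - L)$ with each $v \in L$ attached to some vertex of $P(v)$, which is $C(T)$. Also $T \in C(T)$, since $p_T(v) \in P(v)$. Thus the relation $T' \in C(T)$ is an equivalence relation, and the classes are the sets $C(T)$. All trees in a class share the same selection $(L,P)$, so they share the same class size $N = \prod_{v\in L} \abs{P(v)} = \abs{C(T)}$.

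Finally, for any spanning tree $T'$ with class $C$ of size $N$, we compute
\[
\pr(\cT' = T' \mid \cR = R) = \sum_{T \in C} \frac{1}{\abs{\mathbf{T}}}\cdot\frac{1}{N} = \frac{1}{\abs{\mathbf{T}}}.
\]
The sum runs over $C$ because $T'$ is reachable from $T$ only if $T \in C(T') = C$. This is the uniform distribution.

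The main point requiring care is the bijection step together with $C(T') = C(T)$. Both rest on the condition $P(v) \cap L = \emptyset$, which guarantees that the pendant structure is preserved and that the parents of $L$ are recoverable. I expect no other obstacle.
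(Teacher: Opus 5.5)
Your proof is correct and is essentially the paper's argument. The paper uses reversibility to obtain the symmetry $\pr(T \to T') = \prod_{v \in L} 1/\abs{P(v)} = \pr(T' \to T)$ and then sums, in a detailed-balance computation. You phrase the same fact as a partition of the spanning trees into classes $C(T)$ of size $\prod_{v \in L}\abs{P(v)}$, within which the tree is resampled uniformly, which is exactly the viewpoint the paper sketches just before the lemma. Your explicit checks that the choice $(u_v)_{v \in L}$ can be recovered from $T'$ and that $C(T') = C(T)$ fill in details the paper takes for granted.
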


We remark that this could be deduced from the fact that the Markov chain on spanning trees of $G$ whose transitions are induced by uniformly random $\cS(T,\cR)$-leaf reconfigurations is time-reversible. This is also the main reason why we call such strategies ``reversible''. For completeness, we provide a self-contained proof.

\begin{proof}
    Write $T \xrightarrow{(L,P)} T'$ for the event that a uniformly random $(L,P)$-leaf reconfiguration of $T$ produces the spanning tree $T'$. Consider any fixed value $R$ of the random variable and let $(L,P) \coloneqq \cS(T,R)$. Then, if $T'$ can be obtained from $T$ by an $(L,P)$-leaf reconfiguration, observe that $T$ can also be obtained from $T'$ by an $(L,P)$-leaf reconfiguration and $\cS(T',R) = (L,P)$ since $\cS$ is reversible, and so
    \[
        \pr\left(T \xrightarrow{\cS(T,R)} T'\right) = \prod_{v \in L} \frac{1}{\abs{P(v)}} = \pr\left(T' \xrightarrow{\cS(T',R)} T\right).
    \]
    Otherwise, both probabilities must be $0$. Additionally, $\pr(\cT = T) = \pr(\cT = T')$ as $\cT$ is a uniformly random spanning tree. Using the fact that $\cR$ is independent of $\cT$, it follows that
    {\allowdisplaybreaks%
    \begin{align*}
        \pr(\cT' = T') & = \sum_{T,R} \pr(\cT = T) \cdot \pr(\cR = R) \cdot \pr\left(T \xrightarrow{\cS(T,R)} T'\right) \\
        & = \sum_{T,R} \pr(\cT = T') \cdot \pr(\cR = R) \cdot \pr\left(T' \xrightarrow{\cS(T',R)} T\right) \\
        & = \pr(\cT = T') \cdot \sum_R \left(\pr(\cR = R) \cdot \sum_T \pr\left(T' \xrightarrow{\cS(T',R)} T\right)\right) \\
        & = \pr(\cT = T') \cdot \sum_R \pr(\cR = R) \\
        & = \pr(\cT = T').
    \end{align*}}%
    This shows that $\cT'$ has the same distribution as $\cT$ and is therefore a uniformly random spanning tree of $G$.
\end{proof}

\subsection{A leaf reconfiguration strategy for a random spanning tree}\label{ssec:theleafreconfigurationstrategy}

We now define the specific leaf reconfiguration strategy that allows us to reconfigure a linear number of leaves of $\cT$. By \cref{lem:unispantreemanyleaves}, we know that $\cT$ is very likely to have linearly many leaves. While we would like to reconfigure all leaves of $\cT$, this could fail if the neighbours (in $G$) of most leaves are leaves themselves. In this case, each leaf would only have a small set of potential parents where we could reattach the leaf to the tree, and as a result a uniformly random leaf reconfiguration could no longer guarantee that $\cT'$ is unlikely to have a fixed degree sequence.

Instead, we will select a random subset $\cR \subseteq V(G)$ and only reconfigure leaves within $\cR$. The sets of potential parents can then include any vertex that is not contained in $\cR$ and can therefore be much larger than before. Unfortunately, this still fails if the neighbours of almost all leaves of $\cT$ consist of the same $d$ vertices (for example if $G = K_{d,n-d}$). Indeed, the probability that all of these $d$ vertices are contained in $\cR$ does not decrease with $n$, and so there is a non-trivial probability that the sets of potential parents are again too small.

To circumvent this problem, we first try to reconfigure only leaves in $\cR$ that have a small degree in $G$. In the above case, none of the $d$ neighbours would be reconfigured, and so these neighbours could all be contained in the sets of potential parents. However, if many leaves of $\cT$ have a large degree in $G$, this could yield too few vertices for reconfiguration. In that case, we instead reconfigure leaves in $\cR$ that have a large degree in $G$. Since these leaves will always have many neighbours that are not contained in $\cR$, their sets of potential parents are always large. By adding some additional technical conditions, we can also ensure that this strategy is reversible.

Formally, we define our strategy as follows. Let $T$ be a spanning tree of $G$ and let $R \subseteq V(G)$ be a set of vertices. For any vertex $v \in V(G)$, let
\begin{align*}
    P_1(v) & \coloneqq \{u \in N_G(v) : u \notin R \text{ or } d_G(u) > n^{1/3}\}; \text{ and} \\
    P_2(v) & \coloneqq \{u \in N_G(v) : u \notin R \text{ or } \abs{N_T(u) \setminus R} \ge 2\}.
\end{align*}
We also define two sets of leaves of $T$, namely
\begin{align*}
    L_1 & \coloneqq \{ v \in L(T) \cap R : d_G(v) \le n^{1/3} \text{ and } p_T(v) \in P_1(v) \text{ and } \abs{P_1(v)} \ge d_G(v)/2 \}; \text{ and} \\
    L_2 & \coloneqq \{ v \in L(T) \cap R : d_G(v) > n^{1/3} \text{ and } p_T(v) \in P_2(v) \text{ and } \abs{P_2(v)} \ge d_G(v)/4\}.
\end{align*}
Then, we define a leaf reconfiguration strategy $\cS$ by
\[
    \cS(T,R) \coloneqq \begin{cases}
        (L_1,P_1) & \text{if } \abs{L_1} \ge n/256\text{, and} \\
        (L_2,P_2) & \text{otherwise.}
    \end{cases}
\]

From now on, $\cS$ will always refer to the leaf reconfiguration strategy described above. We begin by showing that this strategy is indeed reversible.

\begin{lemma}\label{lem:leafreconrev}
    The leaf reconfiguration strategy $\cS$ is reversible.
\end{lemma}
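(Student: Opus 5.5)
The plan is to check reversibility directly. Fix a spanning tree $T$ and a set $R$, let $(L,P) \coloneqq \cS(T,R)$, and let $T'$ be obtained from $T$ by an $(L,P)$-leaf reconfiguration. I will show that $P_1,P_2$ computed from $T'$ agree with those computed from $T$. I will also show that $L_1(T')=L_1(T)$ always, and that $L_2(T')=L_2(T)$ whenever we are in the second case. Together these give $\cS(T',R)=\cS(T,R)$: the threshold test $\abs{L_1}\ge n/256$ then gives the same answer for $T'$ as for $T$, and the selected pair is the same.

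\textbf{Basic facts.} $T$ and $T'$ differ only in edges incident to $L\subseteq R$. Since $P(v)\subseteq N_G(v)\setminus L$, every $v\in L$ is still a leaf in $T'$ with parent in $P(v)$. Since $n$ is large, no two leaves are adjacent in $T$, so the parent of a leaf $w\notin L$ is not in $L$. Hence such a $w$ keeps its unique edge, and its parent is unchanged in $T'$.

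\textbf{$P_1$ and $P_2$ do not change.} $P_1$ depends only on $G$ and $R$, so it is trivially the same. For $P_2$, fix a vertex $u$. If $u\in L$, then $u$ is a leaf in both trees, so $\abs{N_T(u)\setminus R}\le 1$ and $\abs{N_{T'}(u)\setminus R}\le 1$, and the condition fails in both. If $u\notin L$, then $N_T(u)$ and $N_{T'}(u)$ differ only by vertices of $L\subseteq R$, so $N_T(u)\setminus R=N_{T'}(u)\setminus R$. Thus $P_2$ is also the same.

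\textbf{Leaf sets.} Since the conditions $d_G(w)\lessgtr n^{1/3}$ and $\abs{P_i(w)}\ge\cdot$ are invariant, and $L\subseteq L_i(T')$ by the basic facts, it remains to treat $w\in R\setminus L$. For such $w$ I must show that being a leaf, and having parent in $P_i(w)$, is the same in $T$ and $T'$. By the basic facts, a problem can only arise if $w$ gains or loses a child from $L$. That requires $w$ to be an old parent $p_T(v)$ or a new parent in $P(v)$ for some $v\in L$, hence $w\in P(v)$ with $w\in R$.

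\emph{Case 1, $P=P_1$.} Then $w\in R$ forces $d_G(w)>n^{1/3}$. Such a $w$ is irrelevant for $L_1$, so $L_1(T')=L_1(T)$. The test $\abs{L_1}\ge n/256$ still holds for $T'$, so the output is again $(L_1,P_1)$.

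\emph{Case 2, $P=P_2$.} Then $w\in R$ forces $\abs{N_T(w)\setminus R}\ge 2$, and this quantity is unchanged in $T'$ by the $P_2$ paragraph. So $w$ is a non-leaf in both trees, and it lies in neither $L_1$ nor $L_2$ for either tree. Therefore $L_1(T')=L_1(T)$, which still has size less than $n/256$, and $L_2(T')=L_2(T)$, so the output is again $(L_2,P_2)$.

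The main obstacle is the bookkeeping in Case 2. The $P_2$ condition depends on $T$ itself, so I must verify that this dependence is stable under reconfiguration. This works because $P_2$ only counts neighbours outside $R$, while reconfiguration only moves edges incident to $L\subseteq R$.
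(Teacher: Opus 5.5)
Your proof is correct and follows essentially the same route as the paper: $P_2$ is invariant because reconfiguration only changes edges incident to $L \subseteq R$, and a vertex of $R \setminus L$ can change its leaf status or parent only if it lies in some $P(v)$, which forces $d_G(w) > n^{1/3}$ in the first branch and $\abs{N_T(w)\setminus R} \ge 2$ in the second. The only difference is organisational: you split by branch first, whereas the paper proves $L_1(T)=L_1(T')$ uniformly and then treats $L_2$ in the second branch.
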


\begin{proof}
    Let $R \subseteq V(G)$ be a subset of the vertices of $G$. We will write $L_1(T)$, $L_2(T)$, and $P_2(T)$ for $L_1$, $L_2$, and $P_2$, respectively, calculated for a spanning tree $T$ and the set $R$. Note that $P_1$ depends on $R$ but not on $T$. 
    
    Let $T$ be a spanning tree of $G$ and let $T'$ be obtained from $T$ by an $\cS(T,R)$-leaf reconfiguration. In order to prove that $\cS$ is reversible, we need to show that $\cS(T,R) = \cS(T',R)$. Let $(L,P) \coloneqq \cS(T,R)$.

    We first prove that $P_2(T) = P_2(T')$. In fact, we claim that for all $u \in V(G)$ we have $\abs{N_T(u) \setminus R} \ge 2$ if and only if $\abs{N_{T'}(u) \setminus R} \ge 2$. Indeed, an $(L,P)$-leaf reconfiguration of $T$ only adds or removes edges that are incident to $L$. Since $L \subseteq R$, it follows that an edge between $u$ and any vertex in $N_G(u) \setminus R$ can only be added or removed during an $(L,P)$-leaf reconfiguration if $u \in L$. So, if $u \notin L$, then $\abs{N_T(u) \setminus R} = \abs{N_{T'}(u) \setminus R}$. On the other hand, if $u \in L$, then $u \in L(T)$ and $u \in L(T')$, and so $\abs{N_T(u) \setminus R} \le 1$ and $\abs{N_{T'}(u) \setminus R} \le 1$. In both cases we have $\abs{N_T(u) \setminus R} \ge 2$ if and only if $\abs{N_{T'}(u) \setminus R} \ge 2$, as claimed. In particular, $P_2(T) = P_2(T')$, and so from now on we will just write $P_2$.

    Next, we prove that $L_1(T) = L_1(T')$. Let $v \in R$ be such that $d_G(v) \le n^{1/3}$. We begin by showing that $v \in L(T)$ if and only if $v \in L(T')$. Indeed, if $v \in P(w)$ for some leaf $w \in L$, then it must hold that $P = P_2$ and $\abs{N_T(v) \setminus R} \ge 2$. By the above argument, it then also follows that $\abs{N_{T'}(v) \setminus R} \ge 2$, and so $v \notin L(T)$ and $v \notin L(T')$. On the other hand, if $v \notin P(w)$ for all $w \in L$, then $d_T(v) = d_{T'}(v)$. In both cases we have $v \in L(T)$ if and only if $v \in L(T')$.

    Moreover, if $v \in L(T)$, then we claim that $p_T(v) \in P_1(v)$ if and only if $p_{T'}(v) \in P_1(v)$. This is trivial if $p_T(v) = p_{T'}(v)$, so suppose that $p_T(v) \neq p_{T'}(v)$. Since $v \in L(T)$, the above argument implies that $v \notin P(w)$ for all $w \in L$, and so in order to have $p_T(v) \neq p_{T'}(v)$ we must have $v \in L$. In particular, $p_T(v) \in P(v)$ and $p_{T'}(v) \in P(v)$. But since $d_G(v) \le n^{1/3}$, the fact that $v \in L$ also implies that $L = L_1(T)$ and so $P = P_1$. In all cases we have $p_T(v) \in P_1(v)$ if and only if $p_{T'}(v) \in P_1(v)$, as claimed.

    So, we have shown that for all $v \in R$ with $d_G(v) \le n^{1/3}$, it holds that $v \in L(T)$ if and only if $v \in L(T')$, and if $v \in L(T)$ then $p_T(v) \in P_1(v)$ if and only if $p_{T'}(v) \in P_1(v)$. This proves that $L_1(T) = L_1(T')$, and so from now on we will just write $L_1$. In particular, $\cS(T,R) = (L_1,P_1)$ if and only if $\cS(T',R) = (L_1,P_1)$.

    Finally, suppose that $\cS(T,R) = (L_2(T),P_2)$. We will prove that $L_2(T) = L_2(T')$. Let $v \in R$. Again, we begin by showing that $v \in L(T)$ if and only if $v \in L(T')$. Similarly to above, if $v \in P(w) = P_2(w)$ for some $w \in L$, then $\abs{N_T(v) \setminus R} \ge 2$. This implies that $\abs{N_{T'}(v) \setminus R} \ge 2$ and so $v \notin L(T)$ and $v \notin L(T')$. On the other hand, if $v \notin P(w)$ for all $w \in L$, then $d_T(v) = d_{T'}(v)$. In both cases we have $v \in L(T)$ if and only if $v \in L(T')$.
    
    Moreover, if $v \in L(T)$, then we claim that $p_T(v) \in P_2(v)$ if and only if $p_{T'}(v) \in P_2(v)$. Similarly to above, this is trivial if $p_T(v) = p_{T'}(v)$, so suppose that $p_T(v) \neq p_{T'}(v)$. Since $v \in L(T)$, the argument in the preceding paragraph implies that $v \notin P(w)$ for all $w \in L$, and so in order to have $p_T(v) \neq p_{T'}(v)$ we must have $v \in L = L_2(T)$. In particular, $p_T(v) \in P_2(v)$ and $p_{T'}(v) \in P_2(v)$. In all cases we have $p_T(v) \in P_2(v)$ if and only if $p_{T'}(v) \in P_2(v)$, as claimed.

    So, we have shown that for all $v \in R$ it holds that $v \in L(T)$ if and only if $v \in L(T')$, and if $v \in L(T)$ then $p_T(v) \in P_2(v)$ if and only if $p_{T'}(v) \in P_2(v)$. This proves that $L_2(T) = L_2(T')$. In particular, $\cS(T',R) = (L_2(T),P_2)$.
\end{proof}

Next, we show that if the random subset $\cR$ includes each vertex of $G$ independently with probability $1/2$, then the leaf reconfiguration strategy $\cS$ allows us to reconfigure a linear number of leaves of $\cT$ as intended.

\begin{lemma}\label{lem:leafreconmanyleaves}
    Let $G$ be a connected graph with $n$ vertices and let $T$ be a spanning tree of $G$ with at least $n/8$ leaves. If $\cR \subseteq V(G)$ is a random subset that includes each vertex independently with probability $1/2$ and $(L,P) \coloneqq \cS(T,\cR)$, then
    \[
        \pr\left(\abs{L} \le \frac{n}{256}\right) \le e^{-\Omega(n^{1/3})}.
    \]
\end{lemma}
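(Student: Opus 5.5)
The plan is to split the (at least $n/8$) leaves of $T$ by their degree in $G$. Call a leaf \emph{light} if $d_G(v)\le n^{1/3}$ and \emph{heavy} otherwise. Then either there are at least $n/16$ light leaves (Case A) or at least $n/16$ heavy leaves (Case B). In Case A we will show $\abs{L_1}> n/256$ with probability $1-e^{-\Omega(n^{1/3})}$, so that $\cS$ selects $(L_1,P_1)$ and $\abs{L}=\abs{L_1}$. In Case B it suffices to show $\abs{L_2}> n/256$ with the same probability. Indeed, then whichever branch $\cS$ takes we get $\abs{L}>n/256$: if $\abs{L_1}\ge n/256$ we are in the first branch, and otherwise $L=L_2$.

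\emph{Case A.} For a light leaf $v$, the event $v\in L_1$ holds whenever three things happen: $v\in\cR$, $p_T(v)\notin\cR$, and at least $\lceil d_G(v)/2\rceil-1$ of the other $d_G(v)-1$ neighbours of $v$ are outside $\cR$ (or heavy). These involve disjoint sets of coordinates. By symmetry of $\mathrm{Bin}(m,1/2)$, the last event has probability at least $1/2$, so $\pr(v\in L_1)\ge 1/8$ and $\ev\abs{L_1}\ge n/128$. For concentration, note that membership of heavy vertices in $\cR$ is irrelevant for light leaves, since heavy vertices always lie in $P_1$. So $\abs{L_1}$ is a function of the independent indicators $\mathbbm{1}[u\in\cR]$ for light $u$ only. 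Flipping such a $u$ affects only light leaves in $\{u\}\cup N_G(u)$, hence changes $\abs{L_1}$ by at most $n^{1/3}+1$. \cref{lem:mcdiarmidsinequality} with $\eps=n/256$ then gives a failure probability of $e^{-\Omega(n^2/(n\cdot n^{2/3}))}=e^{-\Omega(n^{1/3})}$.

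\emph{Case B.} First, for every heavy vertex $v$ we have $P_2(v)\supseteq N_G(v)\setminus\cR$. Since $d_G(v)>n^{1/3}$, a Chernoff bound and a union bound over all $n$ vertices show that with probability $1-e^{-\Omega(n^{1/3})}$ every heavy $v$ has $\abs{P_2(v)}\ge d_G(v)/4$. On this event, a heavy leaf $v$ lies in $L_2$ as soon as $v\in\cR$ and $p_T(v)\in P_2(v)$. Group the heavy leaves by parent: let $C_u$ be the set of heavy leaves with parent $u$, and put $k_u=\abs{C_u}$. Since the leaves in $C_u\setminus\cR$ are tree-neighbours of $u$ outside $\cR$, we have
\[
\abs{L_2}\ge \sum_u Z_u, \qquad Z_u\coloneqq \abs{C_u\cap\cR}\cdot\mathbbm{1}\bigl[u\notin\cR\text{ or }\abs{C_u\setminus\cR}\ge2\bigr].
\]
The $Z_u$ depend on disjoint sets $\{u\}\cup C_u$ and are therefore independent, with $\ev Z_u\ge k_u/4$ (using the event $u\notin\cR$ alone), so $\ev\sum_u Z_u\ge n/64$.

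The main obstacle is concentration, because a single parent may have very many leaf children, so the naive Lipschitz constant is huge. I handle this by splitting at threshold $n^{1/3}$. For parents with $k_u\ge n^{1/3}$, a Chernoff bound plus a union bound give $\abs{C_u\cap\cR}\ge k_u/4$ and $\abs{C_u\setminus\cR}\ge 2$ simultaneously for all such $u$, with failure probability $e^{-\Omega(n^{1/3})}$; then $Z_u\ge k_u/4$ deterministically. For parents with $k_u<n^{1/3}$, each $Z_u\in[0,k_u]$ and $\sum k_u^2\le n^{1/3}\sum k_u\le n^{4/3}$. Hoeffding's inequality (or \cref{lem:mcdiarmidsinequality} applied to the independent blocks) then shows that their sum drops more than $n/512$ below its mean with probability at most $e^{-\Omega(n^{2/3})}$. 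Combining the two parts, $\abs{L_2}\ge \tfrac14\cdot\tfrac{n}{16}-\tfrac{n}{512}>\tfrac{n}{256}$ outside an event of probability $e^{-\Omega(n^{1/3})}$, which completes the proof.
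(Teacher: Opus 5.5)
Your proof is correct and follows the paper's argument: the same light/heavy split of the leaves, the same $1/8$ lower bound with McDiarmid for $L_1$ (your Lipschitz constant $n^{1/3}+1$ is the accurate count, since flipping $u$ can also change whether $u$ itself is in $L_1$), and in the heavy case the same Chernoff bound for $\abs{P_2(v)}$ together with a threshold at $n^{1/3}$ on the parent to control the bounded differences. The only real variation is in Case B: you group heavy leaves by parent and threshold on the number $k_u$ of heavy leaf children, so those children themselves certify $\abs{N_T(u)\setminus\cR}\ge 2$, and Hoeffding over the independent blocks $\{u\}\cup C_u$ handles the small groups; the paper instead thresholds on $d_T(p_T(v))$ and applies McDiarmid to the per-leaf indicators $Y_v$, and both routes give $e^{-\Omega(n^{1/3})}$.
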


In this proof and later in the paper, we will use the following well-known multiplicative Chernoff bounds (see, for example, \cite{alon2016probabilistic}).

\begin{lemma}[Chernoff bound]\label{lem:chernoff}
    Let $X_1, \dots, X_n$ be independent random variables taking values in $\{0, 1\}$ and let $X$ denote their sum. If $\ev(X) \le \mu$ and $\delta \ge 0$, then
    \[
        \pr(X \ge (1 + \delta) \mu) \le e^{-\delta^2 \mu / (2+\delta)}.
    \]
    If $\ev(X) \ge \mu$ and $0 \le \delta \le 1$, then
    \[
        \pr(X \le (1 - \delta) \mu) \le e^{-\delta^2 \mu / 2}.
    \]
\end{lemma}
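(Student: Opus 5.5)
The plan is the standard exponential-moment (Bernstein--Chernoff) argument, applied separately to the two tails. Write $p_i \coloneqq \pr(X_i = 1)$ and $\mu_0 \coloneqq \ev(X) = \sum_i p_i$. For any $t \in \bR$, independence gives
\[
    \ev\left(e^{tX}\right) = \prod_{i=1}^n \left(1 + p_i(e^t - 1)\right) \le \prod_{i=1}^n e^{p_i(e^t-1)} = e^{\mu_0 (e^t - 1)},
\]
using $1 + x \le e^x$. Markov's inequality applied to $e^{tX}$ (for $t > 0$) or to $e^{-sX}$ (for $s > 0$) then converts this moment bound into tail bounds.

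\emph{Upper tail.} Take $t = \log(1+\delta) \ge 0$. Since $e^t - 1 = \delta \ge 0$ and $\mu_0 \le \mu$, we may replace $\mu_0$ by $\mu$, and Markov gives
\[
    \pr(X \ge (1+\delta)\mu) \le \frac{e^{\delta \mu}}{(1+\delta)^{(1+\delta)\mu}} = \exp\bigl(-\mu\left((1+\delta)\log(1+\delta) - \delta\right)\bigr).
\]
It then suffices to prove the elementary inequality $(1+\delta)\log(1+\delta) - \delta \ge \delta^2/(2+\delta)$ for $\delta \ge 0$. This follows from $\log(1+\delta) \ge 2\delta/(2+\delta)$: both sides vanish at $0$, and the derivative of the difference is $\delta^2/((1+\delta)(2+\delta)^2) \ge 0$. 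Substituting gives $(1+\delta)\cdot 2\delta/(2+\delta) - \delta = \delta^2/(2+\delta)$, as required.

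\emph{Lower tail.} For $\delta = 1$ the target bound $\pr(X \le 0) \le e^{-\mu/2}$ follows from $\pr(X=0) = \prod_i (1-p_i) \le e^{-\mu_0} \le e^{-\mu}$, so assume $0 \le \delta < 1$. Apply Markov to $e^{-sX}$ with $s = -\log(1-\delta) \ge 0$, and use $\ev(e^{-sX}) \le e^{\mu_0(e^{-s}-1)} = e^{-\delta\mu_0} \le e^{-\delta\mu}$, since $\mu_0 \ge \mu$. This gives
\[
    \pr(X \le (1-\delta)\mu) \le \exp\bigl(-\mu\left(\delta + (1-\delta)\log(1-\delta)\right)\bigr).
\]
It remains to show $f(\delta) \coloneqq \delta + (1-\delta)\log(1-\delta) - \delta^2/2 \ge 0$ on $[0,1)$. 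We have $f(0) = 0$ and $f'(\delta) = -\log(1-\delta) - \delta \ge 0$, so $f$ is nondecreasing and hence nonnegative.

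The only steps needing care are the monotonicity direction when replacing $\mu_0$ by $\mu$ (which uses the sign of $e^{\pm t}-1$ in each case) and the two calculus inequalities. Neither is a real obstacle, since this is a textbook bound and the paper only cites it.
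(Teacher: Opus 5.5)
Your proof is correct. It is the standard exponential-moment (Chernoff) argument. The paper gives no proof of this lemma and simply cites it as well known (Alon--Spencer), where bounds of this type are derived in essentially this way.

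The only implicit assumption is $\mu \ge 0$, used in your final comparisons for the lower tail. This is automatic in the upper tail, since there $\mu \ge \ev(X) \ge 0$. For $\mu < 0$ the right-hand side $e^{-\delta^2\mu/2}$ is at least $1$, so nothing is lost.
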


\begin{proof}[Proof of \cref{lem:leafreconmanyleaves}]
    If $\cS(T,\cR) = (L_1,P_1)$, then $\abs{L} = \abs{L_1} \ge n/256$. So, it remains to bound the probability of the event that $\cS(T,\cR) = (L_2,P_2)$ and $\abs{L_2} \le n/256$.

    Let $A \coloneqq \{ v \in L(T) : d_G(v) \le n^{1/3} \}$ be the set of leaves with low degree in $G$, and let $B \coloneqq \{ v \in L(T) : d_G(v) > n^{1/3} \}$ be the set of leaves with high degree in $G$. Note that $L_1 \subseteq A$ and $L_2 \subseteq B$. Since $L(T) = A \cup B$, we either have $\abs{A} \ge n/16$ or $\abs{B} \ge n/16$.

    We first consider the case where $\abs{A} \ge n/16$. For every $v \in A$, let $X_v$ be the indicator random variable for the event that $v \in L_1$, and so $\abs{L_1} = \sum_{v \in A} X_v$. Observe that $\pr(X_v = 1) \ge 1/8$ for all $v \in A$. Indeed, $v \in \cR$ with probability $1/2$, $p_T(v) \notin \cR$ with probability $1/2$, and $\abs{N_G(v) \setminus (\{p_T(v)\} \cup \cR)} \ge (d_G(v) - 1) / 2$ with probability $1/2$ by symmetry. As these events are independent and imply that $v \in L_1$, it follows that $\pr(X_v = 1) \ge 1/8$. So, $\ev(\abs{L_1}) \ge \abs{A}/8$. Moreover, changing whether a vertex $u \in V(G)$ is contained in $\cR$ can only change $X_v$ for neighbours $v \in N_G(u)$, and only if $d_G(u) \le n^{1/3}$. In particular, changing whether a vertex $u \in V(G)$ is contained in $\cR$ changes the value of $\abs{L_1}$ by at most $n^{1/3}$.

    So, if $\abs{A} \ge n/16$, then $\ev(\abs{L_1}) \ge \abs{A}/8 \ge n/128$ and by \cref{lem:mcdiarmidsinequality} it follows that
    \[
        \pr\left(\abs{L} \le \frac{n}{256}\right) \le \pr\left(\abs{L_1} \le \frac{n}{256}\right) \le e^{-2(n/256)^2/(n^{5/3})} = e^{-\Omega(n^{1/3})}.
    \]
    
    Otherwise, $\abs{B} \ge n/16$. By \cref{lem:chernoff}, we know for every leaf $v \in B$ that
    \[
        \pr\left(\abs{P_2(v)} \le \frac{d_G(v)}{4}\right) \le \pr\left(\abs{N_G(v) \setminus \cR} \le \frac{d_G(v)}{4}\right) \le e^{-d_G(v)/16} \le e^{-\Omega(n^{1/3})}.
    \]
    Similarly, if $d_T(p_T(v)) > n^{1/3}$, we know by \cref{lem:chernoff} that
    \begin{align*}
        \pr(p_T(v) \notin P_2(v)) & \le \pr(\abs{N_T(p_T(v)) \setminus \cR} \le 1) \\
        & \le \pr\left(\abs{N_T(p_T(v)) \setminus \cR} \le \frac{d_T(p_T(v))}{4}\right) \\
        & \le e^{-d_T(p_T(v))/16} \le e^{-\Omega(n^{1/3})}.
    \end{align*}
    So, by a union bound, the event that there exists a leaf $v \in B$ with $\abs{P_2(v)} \le d_G(v)/4$ or $d_T(p_T(v)) > n^{1/3}$ and $p_T(v) \notin P_2(v)$ has probability at most $e^{-\Omega(n^{1/3})}$. In particular, if $B' \coloneqq \{v \in B \cap \cR : d_T(p_T(v)) > n^{1/3} \text{ or } p_T(v) \in P_2(v)\}$, then $\pr(L_2 \neq B') \le e^{-\Omega(n^{1/3})}$.

    Finally, for every leaf $v \in B$, let $Y_v$ be the indicator random variable for the event that $v \in \cR$ and either $d_T(p_T(v)) > n^{1/3}$ or $p_T(v) \notin \cR$, and so $\abs{B'} \ge \sum_{v \in B} Y_v \eqqcolon Y$. Observe that $\pr(Y_v = 1) \ge 1/4$ for all $v \in B$, and so $\ev(Y) \ge \abs{B}/4$. Moreover, changing whether a vertex $u \in V(G)$ is contained in $\cR$ can only change $Y_v$ for neighbours $v \in N_T(u)$, and only if $d_T(u) \le n^{1/3}$. In particular, changing whether a vertex $u \in V(G)$ is contained in $\cR$ changes the value of $Y$ by at most $n^{1/3}$.
    
    So, as $\abs{B} \ge n/16$, we have $\ev(Y) \ge \abs{B}/4 \ge n/64$ and by \cref{lem:mcdiarmidsinequality} once again it follows that
    \[
        \pr\left(\abs{B'} \le \frac{n}{128}\right) \le \pr\left(Y \le \frac{n}{128}\right) \le e^{-2(n/128)^2/(n^{5/3})} = e^{-\Omega(n^{1/3})}.
    \]
    In particular,
    \[
        \pr\left(\abs{L} \le \frac{n}{256}\right) \le \pr(L_2 \neq B') + \pr\left(\abs{B'} \le \frac{n}{256}\right) \le e^{-\Omega(n^{1/3})}. \qedhere
    \]
\end{proof}

\subsection{Anticoncentration for random subgraphs of bipartite graphs}

Using the fact that the leaf reconfiguration strategy defined in \cref{ssec:theleafreconfigurationstrategy} will reconfigure a linear number of leaves of $\cT$, we want to show that the resulting tree $\cT'$ is unlikely to have any fixed degree sequence. We will prove this using the following anticoncentration result for a random subgraph of a bipartite graph, namely for a subgraph that is obtained by keeping one uniformly random edge incident to each vertex in one part of the bipartition. By applying this result in \cref{ssec:anticoncentration} to the bipartite graph between the leaves that we reconfigure and their potential parents, we can then prove the desired anticoncentration result for the degree sequence of $\cT'$.

\begin{lemma}\label{lem:degreeseqanticoncentration}
    Let $c \le 1$, let $d \ge 66 / c$, and let $n$ be sufficiently large relative to $d$. Suppose that $G$ is a bipartite graph with $n$ vertices and bipartition $V(G) = A \cup B$ where each vertex in $A$ has degree at least $d$ and $\abs{A} = c n$. Suppose we are given integers $a_k$ for $k \in \bZ$ and $b_v$ for $v \in V(G)$. Let $H$ be the random subgraph of $G$ obtained by independently keeping for each vertex $v \in A$ one uniformly random edge incident to $v$, and let $n_H(k)$ denote the number of vertices $v \in V(H)$ with $d_H(v) + b_v = k$. Then,
    \[
        \pr(n_H(k) = a_k \text{ for each } k) \le n^{-\Omega(c d)}.
    \]
\end{lemma}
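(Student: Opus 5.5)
Since every vertex of $A$ keeps exactly one edge, $d_H(v)=1$ for all $v\in A$. So the contribution of $A$ to each $n_H(k)$ is deterministic, and the event says that the multiset $\{d_H(w)+b_w : w\in B\}$ equals a prescribed multiset. I would therefore only study the random degree vector $(d_H(w))_{w\in B}$. It is a function of the independent uniform choices $\sigma(v)\in N_G(v)$, $v\in A$. For $w\in B$ write $\mu_w=\sum_{v\in N_G(w)}1/d_G(v)=\ev d_H(w)$, so that $\sum_w\mu_w=cn$. The plan is to produce $t=\Omega(cd)$ ``independent sources of randomness'', each with point probabilities $n^{-\Omega(1)}$, whose joint outcome is determined by the degree multiset up to $t^{\cO(t)}$ ambiguity. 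This ambiguity is harmless because $n$ is large relative to $d$. I would split into two cases according to a threshold $\mu_w\ge n^{\eps}$ for a small constant $\eps$.

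\emph{Case 1: at least $t$ vertices $w_1,\dots,w_t\in B$ have $\mu_{w_i}\ge n^{\eps}$.} Condition on the choices of all $A$-vertices, and for each $v$ also on whether $\sigma(v)\in\{w_1,\dots,w_t\}$. Given this, the vertices choosing inside the set make independent choices among their neighbours in $\{w_1,\dots,w_t\}$. The degrees of all other $B$-vertices are then fixed. Hence the prescribed multiset determines the multiset of the values $d_H(w_i)+b_{w_i}$, so the event forces $(d_H(w_i))_i$ into at most $t!$ vectors. A local limit estimate for sums of independent indicators (via Chernoff, \cref{lem:chernoff}, the number of vertices choosing some $w_i$ is typically $\gtrsim n^{\eps}$) bounds each such vector's probability by $n^{-\Omega(\eps t)}$. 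Dealing with $A$-vertices adjacent to several $w_i$ is the fiddly part. I would first pass to a subset of the $w_i$ whose mass $\mu$ comes mostly from private neighbours, or else use a multinomial local limit theorem directly.

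\emph{Case 2: fewer than $t$ such vertices.} Then at most $t n^{\eps}\cdot$ (small) mass sits on heavy vertices. So at least $cn/2$ vertices $v\in A$ have at least $d/2$ neighbours that are light, i.e.\ have $\mu\le n^{\eps}$. Greedily, I would find $m=n^{1-\cO(\eps)}$ such $A$-vertices whose light neighbourhoods are pairwise far apart in $G$. Then I would condition on all other choices. Each selected $v$ now independently distributes one unit of degree among its $\ge d/2$ light neighbours, uniformly. The aim is to show that the resulting degree multiset hits a fixed target with probability $n^{-\Omega(cd)}$. For this I would look at a ``type'' statistic: for each selected $v$, the outcome changes $n_H(\cdot)$ by moving one vertex from level $k$ to $k+1$, with $k$ depending on the chosen neighbour. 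Grouping the selected $v$ by the profile of levels among their neighbours gives a sum of $m$ i.i.d.-like vectors in $\bZ^{K}$. I expect to get a multidimensional Littlewood--Offord / local CLT bound of order $m^{-\Omega(r)}$, where $r$ is the dimension of the span of the increments. The hope is $r=\Omega(d)$, coming from the $d/2$ distinct options per vertex, with the factor $c$ coming from the fraction $c$ of usable vertices.

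I expect the main obstacle to be Case~2. Distinct neighbours of $v$ may sit at the same level $d_H+b$, in which case their increments coincide and the effective dimension collapses. The fallback I would try first is to replace single levels by pairs of operations: two selected vertices swapping targets preserve the multiset only in special configurations. By resampling pairs I would then try to extract $\Omega(cd)$ genuinely distinct increment directions, from which a bound of $n^{-\Omega(cd)}$ would follow via repeated application of \cref{lem:chernoff} to the counts along each direction.
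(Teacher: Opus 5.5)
\textbf{There are genuine gaps in both cases.}

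\textbf{Case~1 does not work as stated.} You condition on the choices landing outside $W=\{w_1,\dots,w_t\}$ and on the events $\{\sigma(v)\in W\}$. After that, an $A$-vertex with exactly one neighbour in $W$ is completely determined. So the only randomness left in $(d_H(w_i))_i$ comes from $A$-vertices with at least two neighbours in $W$. The hypothesis $\mu_{w_i}\ge n^{\eps}$ does not supply any such vertices. If each $w_i$ has $dn^{\eps}$ \emph{private} neighbours in $A$, then $(d_H(w_i))_i$ is deterministic after your conditioning. Your proposed repair (restrict to $w_i$ whose mass comes from private neighbours) is exactly this degenerate situation; you have the difficulty backwards. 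Dropping the conditioning does not help either. A private neighbour that avoids $w_i$ adds a unit of degree somewhere else, so the other levels are no longer fixed. Since the $b_v$ are arbitrary, the prescribed multiset then no longer pins down $(d_H(w_i))_i$.

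What you need is many $A$-vertices whose two (reduced) options both lie in the small set under study, i.e.\ large codegrees. The paper gets these from a different dichotomy. After trimming every $A$-degree to exactly $d$, Case~1 assumes that at most $n^{1/4}$ vertices of $B$ have degree $\ge cd/2$. Then the $\ge cdn/2$ edges into them are concentrated on few vertices, and convexity yields $\ge d/8$ disjoint pairs $(v_i,u_i)$ with codegree $\Omega(cd^2n^{1/2})$. Reducing each $A$-neighbourhood to a random pair, and keeping only those equal to some $\{v_i,u_i\}$, gives a binomial swing between $v_i$ and $u_i$ that leaves every other level unchanged. Your example (heavy vertices fed by private neighbours) must go to the other case.

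\textbf{Case~2 stops exactly at the obstacle you name.} (Incidentally, the claim that little mass sits on heavy vertices is false: one heavy vertex can carry mass $cn/d$. Your conclusion about light neighbours only holds because there are fewer than $t\le d/2$ heavy vertices.) Conditioning on all other choices freezes the levels of the light neighbours, and nothing stops them occupying $O(1)$ values. Each increment $e_{\ell+1}-e_\ell$ depends only on the level $\ell$ of the chosen neighbour, so having $d/2$ options per $A$-vertex gives no dimension.

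Your accounting for $c$ is also off. A fraction $c$ of usable vertices would only change the base, not the exponent. Take $G$ biregular with $B$-degrees about $cd/(1-c)$ and $b\equiv 0$. Then only $O(cd)$ levels can be occupied, so there are at most $n^{O(cd)}$ count vectors, and the exponent really is $\Theta(cd)$. The $c$ comes from the $B$-side degrees. In this example a typical vertex has $H$-degree $0$ or $1$, and the higher levels are populated only with probability $d^{-\Theta(cd)}$, independent of $n$.

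The missing idea is to \emph{manufacture} a spread of levels and pay a $d^{-O(cd)}$ factor, which is harmless because $n$ is large relative to $d$. The paper does this as follows.
\begin{itemize}
\item Take polynomially many $v\in B$ of degree $\ge cd/2$ with disjoint sets $N_v$ of size $s=\lceil cd/2\rceil$, and reduce each $u\in N_v$ to a pair containing $v$.
\item Condition on a `prefix' event. Under it, $v$ survives with probability $\ge (s-1)/2^{s-1}$ but survives at any given level with probability $\le 2^{-(s-1)}$. So the levels $d(v)+b_v$ spread over about $s$ values.
\item A reserved $u_v$ then swings between $v$ and a neighbour $w_v$ of strictly higher level.
\item Processing levels in increasing order, the count at level $k$ is decided by $s_k$ independent fair swings at level $k+1$. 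Since every $s_k\le 8\abs{B_5}/(s-1)$, a majorisation argument gives $n^{-\Omega(s)}=n^{-\Omega(cd)}$.
\end{itemize}
A Littlewood--Offord or local limit bound could at best replace the final step, and only after such a spread has been established.
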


We remark that when $G$ is an almost regular bipartite graph, Lee \cite{Lee} proved a concentration result regarding the number of vertices of degree $k$ in $H$. In contrast, \cref{lem:degreeseqanticoncentration} proves an anticoncentration result for the degree sequence of $H$ if the minimum degree in $A$ is large.

The proof of this lemma is quite technical. Roughly speaking, we divide the proof into two cases. First, if enough vertices in $B$ have a very large degree, say degree at least $\Omega(n^{1/2})$, then for each of these vertices we expect that its degree in $H$ has a very strong anticoncentration. We will show that this anticoncentration is enough to prove the conclusion of the lemma in this first case. Otherwise, there are polynomially many vertices in $B$ whose degree is at least $\Omega(c d)$. Although individually their degrees in $H$ only have a weak anticoncentration, the fact that there are many of these vertices means that $n_H(k)$ has a strong anticoncentration. This is enough to prove the conclusion of the lemma in this second case.

In this proof, we will use the well-known inequality $\binom{n}{k} / 2^n \le \sqrt{2/(\pi n)}$ that holds for all $n \ge 1$ and $0 \le k \le n$ as well as the majorisation inequality \cite{HardyLittlewoodPolya,Karamata}.

\begin{lemma}[Majorisation inequality]\label{lem:majorisation}
    Let $f: \bR \to \bR$ be convex and let $x_1 \ge \dots \ge x_n$ and $y_1 \ge \dots \ge y_n$ be such that $\sum_{i=1}^n x_i = \sum_{i=1}^n y_i$ and $\sum_{i=1}^k x_i \ge \sum_{i=1}^k y_i$ for all $k \in [n]$. Then,
    \[
        \sum_{i=1}^n f(x_i) \ge \sum_{i=1}^n f(y_i).
    \]
\end{lemma}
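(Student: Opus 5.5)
The approach is the classical Abel summation argument via chord slopes. For each $i \in [n]$, define a ``slope'' $c_i$ by
\[
    c_i \coloneqq \frac{f(x_i) - f(y_i)}{x_i - y_i} \quad \text{if } x_i \neq y_i, \qquad c_i \coloneqq f'_+(x_i) \quad \text{if } x_i = y_i,
\]
where $f'_+$ denotes the right derivative. It exists and is finite everywhere because $f$ is convex on all of $\bR$. In either case $f(x_i) - f(y_i) = c_i (x_i - y_i)$.

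The key step is to show that $c_1 \ge c_2 \ge \dots \ge c_n$. This uses the standard fact that, for a convex function, the chord slope $s(a,b) \coloneqq (f(a)-f(b))/(a-b)$ is symmetric and nondecreasing in each argument. It also uses $f'_-(t) \le f'_+(t)$, together with the following sandwich: every chord slope with both endpoints $\le t$ is at most $f'_-(t)$, and every chord slope with both endpoints $\ge t$ (not both equal to $t$) is at least $f'_+(t)$.

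Since both sequences are nonincreasing, the pair $(x_{i+1}, y_{i+1})$ is coordinatewise at most $(x_i, y_i)$. I would then check the four cases according to whether $x_i = y_i$ and whether $x_{i+1} = y_{i+1}$:
\begin{itemize}
    \item If neither pair is equal, monotonicity of chord slopes in each argument gives $c_{i+1} \le c_i$ directly.
    \item If exactly one pair is equal, the sandwich inequalities above handle it.
    \item If both pairs are equal, the inequality follows from monotonicity of $f'_+$.
\end{itemize}
This case check is the only delicate point of the proof, and I expect it to be the main obstacle: one has to pick the convention at ties so that it matches the monotonicity in all cases.

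Next, write $X_k \coloneqq \sum_{i \le k} x_i$ and $Y_k \coloneqq \sum_{i \le k} y_i$, with $X_0 = Y_0 = 0$. Summation by parts gives
\[
    \sum_{i=1}^n \bigl(f(x_i) - f(y_i)\bigr) = \sum_{i=1}^n c_i \bigl((X_i - Y_i) - (X_{i-1} - Y_{i-1})\bigr) = \sum_{k=1}^{n-1} (c_k - c_{k+1})(X_k - Y_k) + c_n (X_n - Y_n).
\]
By hypothesis, $X_k - Y_k \ge 0$ for all $k$ and $X_n - Y_n = 0$. By the key step, $c_k - c_{k+1} \ge 0$. Hence every term is nonnegative and the last term vanishes, which proves $\sum_i f(x_i) \ge \sum_i f(y_i)$.
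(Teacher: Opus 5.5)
The paper gives no proof of this lemma. It quotes it as the classical inequality of Hardy--Littlewood--P\'olya and Karamata and uses it only as a black box in Case~2 of the proof of \cref{lem:degreeseqanticoncentration}.

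Your argument is correct. It is essentially Karamata's original proof: Abel summation against a nonincreasing sequence of chord slopes. The other standard route first characterises majorisation by doubly stochastic matrices, or by a chain of Robin Hood transfers, and then applies convexity. Compared with that, your proof is fully elementary and self-contained; citing it, as the paper does, simply saves space for a well-known fact.

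Two small points tighten your key step.
\begin{itemize}
\item In the case where neither pair is tied, ``monotonicity in each argument'' means changing one coordinate at a time. That passes through either $(x_i,y_{i+1})$ or $(x_{i+1},y_i)$, and the intermediate pair must not be a tie. At most one of them can be a tie: if both were, then $x_{i+1}\le x_i=y_{i+1}\le y_i=x_{i+1}$, forcing $x_i=y_i$. So you route through the other one.
\item The tie convention is not actually delicate. Chord slopes with both endpoints $\le t$ are at most $f'_-(t)$. Chord slopes with both endpoints $\ge t$ (not both equal to $t$) are at least $f'_+(t)$. Also $f'_+(t')\le f'_-(t)$ whenever $t'<t$. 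Hence any fixed selection $g(t)\in[f'_-(t),f'_+(t)]$, for instance always the left derivative, makes all four of your cases go through.
\end{itemize}
The summation-by-parts identity and the sign conclusion are exactly right.
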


\begin{proof}[Proof of \cref{lem:degreeseqanticoncentration}]
    Note that we may construct $H$ from $G$ by repeatedly choosing a vertex $v \in A$ with at least two incident edges and deleting a uniformly random edge incident to $v$. If we can show that the resulting graph $G'$ after some number of these operations satisfies the conclusion of the lemma, then $G$ also satisfies this conclusion. In particular, we may and will frequently reduce the neighbourhood of some vertices $v \in A$ to a uniformly random subset of $N_G(v)$ of a fixed size. If we reduce the neighbourhood to a single vertex, this means that we reveal the neighbour of $v$ in $H$.

    Since we may reduce the neighbourhood of each vertex $v \in A$ to a set of size $d$, we will assume that every vertex in $A$ has degree exactly $d$. Let $B_1 \coloneqq \{v \in B : d_G(v) \ge c d / 2\}$. Note that $e(A, B_1) \ge d \abs{A} - (c d / 2) \abs{B \setminus B_1} \ge c d n / 2$.

    \textbf{Case 1:} Suppose that $\abs{B_1} \le n^{1/4}$. In this case, we claim that we can find at least $d/8$ pairs of vertices $(v_1, u_1), \dots, (v_t, u_t)$ from $B_1$ such that all these vertices are distinct and $\abs{N_G(v_i) \cap N_G(u_i)} \ge c d^2 n^{1/2} / 32$ for all $i \in [t]$.
    
    Indeed, suppose that we have constructed $(v_1, u_1), \dots, (v_t, u_t)$ for some $t < d/8$, and let $B_2 \coloneqq B_1 \setminus \{v_1, u_1, \dots, v_t, u_t\}$. Then,
    \[
        e(A, B_2) \ge e(A, B_1) - 2 t \abs{A} \ge \frac{c d n}{2} - \frac{c d n}{4} \ge \frac{c d n}{4}.
    \]
    By Jensen's inequality, this implies that
    \[
        \sum_{v, u \in B_2,v \neq u} \abs{N_G(v) \cap N_G(u)} = \sum_{v \in A} \binom{\abs{N_G(v) \cap B_2}}{2} \ge \abs{A} \binom{e(A, B_2) / \abs{A}}{2} \ge c n \binom{d / 4}{2}.
    \]
    In particular, there exists a pair of distinct vertices $(v_{t+1}, u_{t+1})$ in $B_2$ with
    \[
        \abs{N_G(v_{t+1}) \cap N_G(u_{t+1})} \ge \frac{c n \binom{d / 4}{2}}{\binom{\abs{B_2}}{2}} \ge \frac{c d^2 n^{1/2}}{32},
    \]
    where we used the fact that $d \ge 8$. So, we have constructed the pair $(v_{t+1},u_{t+1})$. By iterating this argument, it follows that we can construct $t$ such pairs for some $d/8 \le t < d/8 + 1$, which proves the claim.

    Now, reduce the neighbourhood of each vertex $v \in A$ to two vertices, and if the neighbourhood afterward is not one of the pairs $(v_i, u_i)$, then reduce the neighbourhood to a single vertex. Let $G'$ be the resulting graph. Note that $\ev(\abs{N_{G'}(v_i) \cap N_{G'}(u_i)}) \ge \abs{N_G(v_i) \cap N_G(u_i)} / \binom{d}{2} \ge c n^{1/2} / 16$. By \cref{lem:chernoff}, we know that
    \[
        \pr\left(\abs{N_{G'}(v_i) \cap N_{G'}(u_i)} \le \frac{c n^{1/2}}{32}\right) \le e^{-c n^{1/2} / 128}.
    \]
    That is, with an error probability of at most $e^{-\Omega(n^{1/2})}$, we may assume that $\abs{N_{G'}(v_i) \cap N_{G'}(u_i)} \ge c n^{1/2}/32 \eqqcolon s$ for all $i \in [t]$.

    Finally, reduce the neighbourhood of each vertex $v \in A$ that has two neighbours to a single vertex. Observe that the degree of each vertex $v \in V(G) \setminus \{v_1, u_1, \dots, v_t, u_t\}$ was already determined before this final step, so for these vertices we already knew the value of $d_H(v) + b_v$. In particular, if we want that $n_H(k) = a_k$ for all $k$, then there is a set $D$ of at most $2 t$ integers such that after this final step it must hold that $d_H(v_i) + b_{v_i} \in D$ for all $i \in [t]$. However, in this final step, the number of additional edges that will be removed from $v_i$ has a binomial distribution with $\abs{N_{G'}(v_i) \cap N_{G'}(u_i)} \ge s$ independent Bernoulli trials that each have probability $1/2$, and these events are independent for different vertices $v_i$. So, it follows that
    \[
        \pr(d_H(v_i) + b_{v_i} \in D \text{ for each } i \in [t]) \le \left(2 t \frac{\binom{s}{\floor{s/2}}}{2^s}\right)^t \le (2 t)^t \left(\frac{1}{\sqrt{s}}\right)^t \le d^d (c^2 n)^{-d/32}.
    \]
    Since $n$ is sufficiently large relative to $d$, this shows that
    \[
        \pr(n_H(k) = a_k \text{ for each } k) \le n^{-\Omega(d)}.
    \]

    \textbf{Case 2:} Suppose that $\abs{B_1} \ge n^{1/4}$. For each vertex $v \in B_1$, let $N_v \subseteq N_G(v)$ be a subset of size $\ceil{c d / 2} \eqqcolon s$. Since each vertex in $A$ has $d$ neighbours in $B$, there is a subset $B_2 \subseteq B_1$ of size at least $\abs{B_1} / (s d) \ge n^{1/4} / d^2$ such that the sets $N_v$ for $v \in B_2$ are disjoint.

    Then, for each vertex $v \in B_2$, reduce the neighbourhood of each vertex $u \in N_v$ to two vertices, and if these neighbourhoods afterward do not all contain $v$ or are not disjoint apart from $v$, then reduce these neighbourhoods to a single vertex. Furthermore, reduce the neighbourhood of every other vertex $u \in A \setminus (\bigcup_{v \in B_2} N_v)$ to a single vertex. Let $G'$ be the resulting graph and let $B_3 \subseteq B_2$ be the subset of those vertices $v \in B_2$ with $d_{G'}(u) = 2$ for all $u \in N_v$. Note that for each $v \in B_2$,
    \[
        \pr(v \in B_3) \ge \frac{\prod_{i=1}^s (d-i)}{\binom{d}{2}^s} \ge \frac{1}{d^{2s}},
    \]
    and these events are independent for different vertices $v \in B_2$. Therefore, $\ev(\abs{B_3}) \ge \abs{B_2} / d^{2s} \ge n^{1/4} / d^{2s+2}$ and, by \cref{lem:chernoff}, we know that
    \[
        \pr\left(\abs{B_3} \le \frac{n^{1/4}}{2 \cdot d^{2s+2}}\right) \le e^{-n^{1/4}/(8 \cdot d^{2s+2})}.
    \]
    That is, with an error probability of at most $e^{-\Omega(n^{1/4} / d^{\Theta(cd)})}$, we may assume that $\abs{B_3} \ge n^{1/4} / (2 \cdot d^{2s+2})$.

    Now, for each vertex $v \in B_3$, choose one neighbour $u_v \in N_v$, let $w_v \in N_{G'}(u_v) \setminus \{v\}$ be the other neighbour of $u_v$, and choose an order of the other neighbours $N_v \setminus \{u_v\}$. Reduce the neighbourhood of each vertex $u \in N_v \setminus \{u_v\}$ to a single vertex. If the set of vertices $u \in N_v \setminus \{u_v\}$ whose remaining neighbour is $v$ is not a prefix of the order of $N_v \setminus \{u_v\}$, then also reduce the neighbourhood of $u_v$ to a single vertex. Otherwise, if the new degree of $v$ plus $b_v$ is equal to the new degree of $w_v$ plus $b_{w_v}$, then also reduce the neighbourhood of $u_v$ to a single vertex. Let $G''$ be the resulting graph and let $B_4 \subseteq B_3$ be the subset of those vertices $v \in B_3$ with $d_{G''}(u_v) = 2$. Note that for each $v \in B_3$,
    \[
        \pr(v \in B_4) \ge \frac{s - 1}{2^{s-1}},
    \]
    and these events are independent for different vertices $v \in B_3$. Therefore, $\ev(\abs{B_4}) \ge (s-1) \abs{B_3} / 2^{s-1} \ge n^{1/4} / (2^s d^{2s+2})$ and, by \cref{lem:chernoff}, we know that
    \[
        \pr\left(\abs{B_4} \le \frac{s-1}{2^s} \abs{B_3}\right) \le e^{-(s-1) \abs{B_3}/2^{s+2}} \le e^{-n^{1/4}/(2^{s+3} d^{2s+2})}.
    \]
    That is, with an error probability of at most $e^{-\Omega(n^{1/4} / d^{\Theta(cd)})}$, we may assume that $\abs{B_4} \ge (s-1) \abs{B_3} / 2^s$. Moreover, for any $k \in \bZ$, let $B_4(k) \coloneqq \{v \in B_4 : d_{G''}(v) + b_v = k\}$. Note that for each $v \in B_3$,
    \[
        \pr(v \in B_4(k)) \le \frac{1}{2^{s-1}},
    \]
    and these events are independent for different vertices $v \in B_3$. Therefore, $\ev(\abs{B_4(k)}) \le \abs{B_3} / 2^{s-1}$ and, by \cref{lem:chernoff}, we know that
    \[
        \pr\left(\abs{B_4(k)} \ge \frac{\abs{B_3}}{2^{s-2}}\right) \le e^{-\abs{B_3}/(3 \cdot 2^{s-1})} \le e^{-n^{1/4}/(2^{s+2} d^{2s+2})}.
    \]
    Also note that if $\abs{k - b_v} \ge n$ for all $v \in B_3$, then $B_4(k) = \emptyset$. Therefore, by a union bound over at most $2 n^2$ values of $k$, it follows that with an error probability of at most $e^{-\Omega(n^{1/4} / d^{\Theta(cd)})}$, we may assume that $\abs{B_4(k)} \le \abs{B_3} / 2^{s-2}$ for each $k \in \bZ$.

    Without loss of generality, we may assume that for at least half of the vertices $v \in B_4$ we have $d_{G''}(v) + b_v < d_{G''}(w_v) + b_{w_v}$. Let $B_5 \subseteq B_4$ be the subset of these vertices. For each vertex $v \in B_4 \setminus B_5$, reduce the neighbourhood of $u_v$ to a single vertex, and let $G'''$ be the resulting graph.

    Finally, to obtain $H$, iterate through $k \in \bZ$ in increasing order. Then, for a fixed $k$, let $B_5(k+1) \coloneqq B_5 \cap B_4(k+1) = \{v \in B_5 : d_{G'''}(v) + b_v = k + 1\}$, and for each vertex $v \in B_5(k+1)$ reduce the neighbourhood of $u_v$ to a single vertex. Observe that the degree of each vertex $v \in V(G) \setminus (B_5 \cup \{w_v : v \in B_5\})$ was already determined before this final step, so for these vertices we already knew $d_H(v) + b_v$. Moreover, for those vertices $v \in B_5$ for which we had not yet reduced the neighbourhood of $u_v$ to a single vertex, it did hold that $d_{G'''}(w_v) + b_{w_v} > d_{G'''}(v) + b_v \ge k + 1$. So, the vertices $v \in B_5(k+1)$ are the only remaining vertices of $G$ that could still satisfy $d_H(v) + b_v = k$. In particular, if we want that $n_H(k) = a_k$, then we know the exact number of vertices $v \in B_5(k+1)$ for which the edge between $v$ and $u_v$ has to be removed in this final step. However, the number of vertices $v \in B_5(k+1)$ for which the edge between $v$ and $u_v$ is removed has a binomial distribution with $\abs{B_5(k+1)} \eqqcolon s_k$ independent trials that each have probability $1/2$, so the probability that this is the correct number of vertices is at most $\binom{s_k}{\floor{s_k/2}} / 2^{s_k}$. It follows that
    \[
        \pr(n_H(k) = a_k \text{ for each } k) \le \prod_{k \in \bZ} \frac{\binom{s_k}{\floor{s_k/2}}}{2^{s_k}} \le \prod_{k \in K} \sqrt{\frac{2}{\pi s_k}},
    \]
    where $K \coloneqq \{k \in \bZ : s_k \neq 0\}$. This product is maximised if its logarithm is maximised, namely
    \[
        \log\left(\prod_{k \in K} \sqrt{\frac{2}{\pi s_k}}\right) = \frac{1}{2}\sum_{k \in K} \left(\log\left(\frac{2}{\pi}\right) - \log(s_k)\right)
    \]
    Note that $\log(2/\pi) - \log(s_k)$ is a convex function. We also know that $\sum_{k \in K} s_k = \abs{B_5}$ and
    \[
        s_k = \abs{B_5(k+1)} \le \abs{B_4(k+1)} \le \frac{\abs{B_3}}{2^{s-2}} \le \frac{4}{s-1} \abs{B_4} \le \frac{8}{s-1} \abs{B_5}.
    \]
    Therefore, by \cref{lem:majorisation}, the above sum and therefore also the product is maximised if for all $k \in K$ except possibly one it holds that $s_k \in \{1, 8 \abs{B_5} / (s-1)\}$. If $\abs{K} \le \abs{B_5} / 2$, then there can be at least $\floor{(\abs{B_5} / 2) / (8 \abs{B_5} / (s-1))} = \floor{(s-1)/16}$ indices $k \in K$ with $s_k = 8 \abs{B_5} / (s-1)$ and so
    \[
        \pr(n_H(k) = a_k \text{ for each } k) \le \left(\sqrt{\frac{(s-1)}{8 \abs{B_5}}}\right)^{\floor{(s-1)/16}} \le \left(\frac{s 2^s d^{2s+2}}{n^{1/4}}\right)^{\floor{(s-1)/32}} = n^{-\Omega(cd)},
    \]
    where we used the fact that $s \ge c d / 2 \ge 33$ and that $n$ is sufficiently large relative to $d$. Otherwise, $\abs{K} \ge \abs{B_5} / 2$ and so
    \[
        \pr(n_H(k) = a_k \text{ for each } k) \le \left(\sqrt{\frac{2}{\pi}}\right)^{\abs{K}} \le \left(\frac{2}{\pi}\right)^{n^{1/4}/(2^{s+3} d^{2s+2})} = e^{-\Omega(n^{1/4} / d^{\Theta(cd)})}. \qedhere
    \]
\end{proof}

\subsection{Anticoncentration after the leaf reconfiguration}\label{ssec:anticoncentration}

We can now prove our main anticoncentration result by applying the leaf reconfiguration strategy from \cref{ssec:theleafreconfigurationstrategy} to a uniformly random spanning tree $\cT$ of $G$ and then using \cref{lem:degreeseqanticoncentration} to show that the degree sequence of the resulting spanning tree $\cT'$ has strong anticoncentration properties.

\begin{proof}[Proof of \cref{thm:anticoncentrationminimumdegree}]
    Let $d$ be sufficiently large, and let $n$ be sufficiently large relative to $d$. Suppose that $G$ is a connected graph with $n$ vertices and minimum degree $d$. Let $\cT$ be a uniformly random spanning tree of $G$, and let $T$ be a tree. We want to show that
    \[
        \pr(\cT \iso T) \le n^{-\Omega(d)}.
    \]

    Let $\cT'$ be obtained from $\cT$ by a uniformly random $\cS(\cT,\cR)$-leaf reconfiguration where $\cR \subseteq V(G)$ is a random subset that includes each vertex independently with probability $1/2$ and that is independent of $\cT$. Since $\cS$ is reversible by \cref{lem:leafreconrev}, we know by \cref{lem:revleafreconunistatdist} that $\cT'$ is a uniformly random spanning tree of $G$. Therefore, it suffices to prove that $\pr(\cT' \iso T) \le n^{-\Omega(d)}$.

    By \cref{lem:unispantreemanyleaves}, we know that $\pr(\abs{L(\cT)} \le n/8) \le e^{-\Omega(n)}$. Let $(L,P) \coloneqq \cS(\cT,\cR)$, and note that \cref{lem:leafreconmanyleaves} implies that
    \[
        \pr(\abs{L} \le n/256 \mid \abs{L(\cT)} \ge n/8) \le e^{-\Omega(n^{1/3})}.
    \]
    So,
    \begin{align*}
        \pr(\abs{L} \le n/256) & \le \pr(\abs{L(\cT)} \le n/8) + \pr(\abs{L} \le n/256 \mid \abs{L(\cT)} \ge n/8) \\
        & \le e^{-\Omega(n)} + e^{-\Omega(n^{1/3})} \le e^{-\Omega(n^{1/3})}.
    \end{align*}
    Recall that by the definition of $\cS$, for all $v \in L$ we have $\abs{P(v)} \ge d_G(v)/4 \ge d/4$.

    Let $G'$ be the bipartite graph between $A \coloneqq L$ and $B \coloneqq V(G) \setminus L$ with $N_{G'}(v) \coloneqq P(v)$ for all $v \in A$. Note that $\cT'$ is formed by taking the subgraph $\cT[B]$ of $\cT$ induced on $B$ and adding the random subgraph $H$ of $G'$ obtained by independently keeping for each vertex $v \in A$ one uniformly random edge incident to $v$. Let $n_{\cT'}(k)$ be the number of vertices $v \in V(\cT')$ with $d_{\cT'}(v) = k$. Note that $n_{\cT'}(k) = n_H(k)$ where $n_H(k)$ is the number of vertices $v \in V(H)$ with $d_H(v) + b_v = k$ where $b_v \coloneqq d_{\cT[B]}(v)$ if $v \in B$ and $b_v \coloneqq 0$ otherwise. Also, let $a_k$ be the number of vertices $v \in V(T)$ with $d_T(v) = k$. Clearly, if $\cT' \iso T$, then we must have $n_H(k) = n_{\cT'}(k) = a_k$ for all $k$. However, by \cref{lem:degreeseqanticoncentration} we know that $\pr(n_H(k) = a_k \text{ for each } k \mid \abs{L} \ge n/256) \le n^{-\Omega(d)}$. Therefore, it follows that
    \begin{align*}
        \pr(\cT' \iso T) & \le \pr(\abs{L} \le n/256) + \pr(n_H(k) = a_k \text{ for each } k \mid \abs{L} \ge n/256) \\
        & \le e^{-\Omega(n^{1/3})} + n^{-\Omega(d)} = n^{-\Omega(d)}. \qedhere
    \end{align*}
\end{proof}

\section{Open problems}\label{sec:openproblems}

In this paper, we proved an anticoncentration property for a uniformly random spanning tree in a graph with large minimum degree. Although this anticoncentration is optimal up to the constant factor in the exponent, we conjecture that the optimal constant factor in the exponent should be $1/2$. This would match the anticoncentration of a uniformly random spanning tree of $K_{d,n-d}$. Indeed, up to isomorphism, a spanning tree of $K_{d,n-d}$ is determined by a minimal subtree that connects the $d$ vertices in the small class and the number of leaves attached to each of these $d$ vertices. There are only a bounded number of configurations for the former, and in a uniformly random spanning tree, the latter has a multinomial distribution with $n - \cO(d)$ trials where each outcome has a success probability of $1/d$. Therefore, some isomorphism class is obtained with probability at least $\Omega(n^{-(d-1)/2})$.

\begin{conjecture}\label{conj:anticoncentration}
    Let $d$ be sufficiently large, and let $n$ be sufficiently large relative to $d$. Suppose that $G$ is a connected graph with $n$ vertices and minimum degree at least $d$, and let $\cT$ be a uniformly random spanning tree of $G$. Then, for every tree $T$ it holds that
    \[
        \pr(\cT \iso T) \le n^{-(1/2-o_n(1)) (d-1)}.
    \]
\end{conjecture}

What about the number of non-isomorphic spanning trees? The anticoncentration property from \cref{conj:anticoncentration} would imply that $G$ has at least $n^{(1/2-o_n(1)) (d-1)}$ non-isomorphic spanning trees. However, $K_{d,n-d}$ has $\Omega(n^{d-1})$ non-isomorphic spanning trees, and we conjecture that this larger quantity gives an essentially optimal lower bound.

\begin{conjecture}
    Let $d$ be sufficiently large, and let $n$ be sufficiently large relative to $d$. Suppose that $G$ is a connected graph with $n$ vertices and minimum degree at least $d$. Then, the number of non-isomorphic spanning trees of $G$ is at least $\Omega(n^{d-1})$.
\end{conjecture}

\bibliography{bib}

\end{document}